\newtheorem{lem}{Lemma}
\newtheorem{thm}{Theorem}
\renewcommand{\thethm}{\Alph{thm}}
\begin{document}
\title{Contractible Non-Edges in 3-Connected Graphs}

\author{Shuai Kou$^a$, \quad Chengfu Qin$^b$, \quad Weihua Yang$^a$\footnote{Corresponding author. E-mail: yangweihua@tyut.edu.cn, ywh222@163.com}, \quad Mingzu Zhang$^c$\\
\small $^a$ Department of Mathematics, Taiyuan University of Technology, Taiyuan 030024, China\\
\small $^b$ Department of Mathematics, Nanning Normal University, Nanning 530001, China\\
\small $^c$ College of Mathematics and System Sciences, Xinjiang University, Urumqi 830046, China\\
}
\date{}

\maketitle {\flushleft\bf Abstract:} {\small  We call a pair of non-adjacent vertices in $G$ a ``non-edge''. Contraction of a non-edge $\{u, v\}$ in $G$ is the replacement of $u$ and $v$ with a single vertex $z$ and then making all the vertices that are adjacent to $u$ or $v$ adjacent to $z$. A non-edge $\{u, v\}$ is said to be contractible in a $k$-connected graph $G$, if the resulting graph after its contraction remains $k$-connected. Tsz Lung Chan characterized all 3-connected graphs (finite or infinite) that does not contain any contractible non-edges in 2019, and posed the problem of characterizing all 3-connected graphs that contain exactly one contractible non-edge. In this paper, we solve this problem.}

{\flushleft\bf Keywords}:
 Contraction; Contractible non-edge; 3-connected graph

\section{Introduction}
In this paper, we only consider simple undirected graphs, with undefined terms and notations following~\cite{Bondy1}. For a graph $G$, let $V(G)$ and $E(G)$ denote the set of vertices of $G$ and the set of edges of $G$, respectively. The set of neighbours of a vertex $x$ is denoted by $N_{G}(x)$. We define the degree of $x\in V(G)$ by $d_{G}(x)$, namely $d_{G}(x)=|N_{G}(x)|$. For $S\subseteq V(G)$, let $G[S]$ denote the subgraph induced by $S$, and let $G-S$ denote the graph obtained from $G$ by deleting the vertices of $S$ together with the edges incident with them. A \emph{wheel} $W_{n}$ of order $n$ is a graph obtained from a cycle of order $n-1$ by adding a new vertex adjacent to all the vertices of the cycle. The complete graph of order $n$ is denoted by $K_{n}$. Let $K_{n}^{-}$ be the graph obtained from $K_{n}$ by deleting one edge. If vertices $u$ and $v$ are connected in $G$, the \emph{distance} between $u$ and $v$ in $G$, denoted by $d_{G}(u, v)$, is the length of a shortest $(u, v)$-path in $G$.

An edge $e=xy$ of $G$ is said to be \emph{contracted} if it is deleted and its ends are identified. Let $k$ be an integer such that $k\geq 2$ and let $G$ be a $k$-connected graph with $|V(G)|\geq k+2$. An edge $e$ of $G$ is said to be \emph{k-contractible} if the contraction of the edge results in a $k$-connected graph. A \emph{non-edge} of $G$ is a pair of non-adjacent vertices in $G$. Like edge contractions, contraction of a non-edge $\{u, v\}$ in $G$ is the replacement of $u$ and $v$ with a single vertex $z$ and then making all the vertices that are adjacent to $u$ or $v$ adjacent to $z$. A non-edge $\{u, v\}$ is said to be contractible in a $k$-connected graph, if the resulting graph after its contraction remains $k$-connected.

There have been many results concerning the number and distribution of contractible edges of 3-connected graphs. Kriesell~\cite{Kriesell1} studied contractible non-edges of 3-connected finite graphs and proved the following result.

\begin{thm}\cite{Kriesell1}\label{thmA}
Every non-complete 3-connected finite graph neither isomorphic to a wheel nor isomorphic to one of the ten graphs in Fig.~\ref{fig1} contains a contractible non-edge.
\end{thm}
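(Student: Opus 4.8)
The plan is to prove the equivalent contrapositive after a standard reformulation. For a non-complete finite $3$-connected graph $G$ (so $|V(G)|\ge 5$) and a non-edge $\{u,v\}$ of $G$, the contracted graph $G/\{u,v\}$ is simple (no multi-edge appears since $u\not\sim v$), and $G/\{u,v\}$ is $3$-connected if and only if no $3$-cut of $G$ contains both $u$ and $v$: a cut of $G/\{u,v\}$ of size $\le 2$ avoiding the identified vertex $z$ would already separate $G$; the singleton $\{z\}$ cannot separate $G/\{u,v\}$ since $G-\{u,v\}$ is connected; and a $2$-cut $\{z,r\}$ of $G/\{u,v\}$ corresponds exactly to a $3$-cut $\{u,v,r\}$ of $G$. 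Hence Theorem~\ref{thmA} is equivalent to: if $G$ is a non-complete finite $3$-connected graph in which every non-edge lies in a $3$-cut, then $G$ is a wheel or one of the ten graphs of Fig.~\ref{fig1}. I assume $G$ is such a graph. Throughout I use that every $3$-cut $T$ of $G$ is \emph{tight} — no $2$-subset of $T$ separates $G$, so each vertex of $T$ has a neighbour in every component of $G-T$ — and consequently each component $C$ of $G-T$ is a fragment with $N_G(C)=T$, and $G$ has no edge between two distinct components of $G-T$.

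Next I would extract a minimal piece. Let $\mathcal F$ be the family of fragments $F$ of $G$ whose cut $N_G(F)$ contains a non-edge of $G$; then $\mathcal F\ne\emptyset$ since $G$ is non-complete and every non-edge lies in a $3$-cut. Fix $F\in\mathcal F$ with $|F|$ minimum and write $N_G(F)=\{u,v,w\}$ with $uv\notin E(G)$. The first claim is that $G[F]$ is a clique: if $\{a,b\}\subseteq F$ were a non-edge, it would lie in a $3$-cut $T'=\{a,b,c\}$; since there are no edges between $F$ and $V(G)\setminus(F\cup N_G(F))$ and since $\{u,v,w\}$ is tight, the set $\big(V(G)\setminus(F\cup N_G(F))\big)\cup\big(N_G(F)\setminus\{c\}\big)$ lies in a single component of $G-T'$, so some other component $D$ of $G-T'$ satisfies $D\subseteq F\setminus\{a,b\}\subsetneq F$; but then $N_G(D)=T'$ contains the non-edge $\{a,b\}$, contradicting minimality of $|F|$. (When $c\in V(G)\setminus(F\cup N_G(F))$ one argues instead with the non-edge $\{a,c\}$ or $\{b,c\}$, which exists because that part is separated from $F$.) A further round of fragment-pushing, using that $F$ is now a clique whose vertices have degree $\ge 3$, should force $|F|$ to be small; the analysis then concentrates on $|F|=1$, i.e. on a vertex $x$ of degree $3$ with $N_G(x)=\{u,v,w\}$ and $G[N_G(x)]$ not a triangle.

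The heart of the argument is the analysis of such a cubic vertex $x$, for which $\{u,v,w\}=N_G(x)$ is a $3$-cut with $\{x\}$ a fragment. In the generic case $G[N_G(x)]$ is a path $u\!-\!w\!-\!v$ (the situation of a rim vertex of a wheel with hub $w$): I would pick a neighbour $x_1$ of $u$ on the far side of the cut and show, by exhibiting either a non-edge of $G$ lying in no $3$-cut or a member of $\mathcal F$ smaller than $F$ whenever the local picture deviates from the wheel pattern, that $x_1$ is again cubic with neighbourhood a path through $w$; iterating, a rim is built up vertex by vertex and must close into a cycle around $w$, so $G$ is a wheel. In the remaining cases — $G[N_G(x)]$ with fewer edges, or the propagation stalling — the hypothesis that every non-edge lies in a $3$-cut pins $G$ down to a bounded number of vertices, leaving finitely many graphs to be listed and matched against Fig.~\ref{fig1}; the reverse implication, that wheels and each of the ten graphs have no contractible non-edge, is then a routine direct check. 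I expect the main obstacle to be exactly this last structural step: controlling how the $3$-cuts interlock around cubic vertices so that any deviation from the wheel pattern forces a contractible non-edge or collapses $G$ onto a small vertex set, and then verifying that the resulting sporadic list has precisely the ten members of Fig.~\ref{fig1}. The reformulation, the tightness observations, and the reverse verification are routine; the delicate case analysis around cubic vertices, together with the finite check at the end, is where the real work — and the risk of error — lies.
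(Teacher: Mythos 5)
This paper does not prove Theorem~\ref{thmA} at all: it is quoted from Kriesell's paper \cite{Kriesell1} and used as a black box, so there is no in-paper proof to compare your attempt against. Your opening reformulation is correct and standard (a non-edge $\{u,v\}$ is contractible if and only if no $3$-cut of $G$ contains both $u$ and $v$, given $|V(G)|\geq 5$), and the strategy of taking a minimal fragment $F$ whose neighbourhood contains a non-edge is indeed the right kind of tool; the argument that $G[F]$ must be a clique is essentially sound once one checks carefully where the third vertex $c$ of the cut $T'=\{a,b,c\}$ can lie.

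The genuine gap is that everything after that point is a plan rather than a proof, and the deferred part is precisely the entire content of the theorem. Two concrete problems. First, the step ``a further round of fragment-pushing should force $|F|$ to be small; the analysis then concentrates on $|F|=1$'' is asserted with no argument, and it is not obviously legitimate to restrict to $|F|=1$: the exceptional graphs do contain minimal fragments of size $2$ that are cliques (for instance the two interior vertices of a semi-wheel of order five over an independent $3$-cut, as in $H_{4}^{\prime}$ of Lemma~\ref{lem6}), so the case $|F|\geq 2$ must be analysed, not dismissed. Second, the core of the theorem --- the propagation argument that a cubic vertex whose neighbourhood is a path forces a wheel, and the enumeration showing that every other configuration collapses to exactly the ten graphs of Fig.~\ref{fig1} --- is described only by ``I would show'' and ``I expect''; you yourself identify this as ``where the real work and the risk of error lies.'' Since no part of that case analysis is carried out, and since the correctness of the statement hinges entirely on it (one cannot even verify from your text that the sporadic list has ten members rather than nine or eleven), the proposal cannot be accepted as a proof; it is an outline of the approach Kriesell actually follows, with the decisive steps missing.
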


\begin{figure}
  \centering
  \includegraphics[width=15cm]{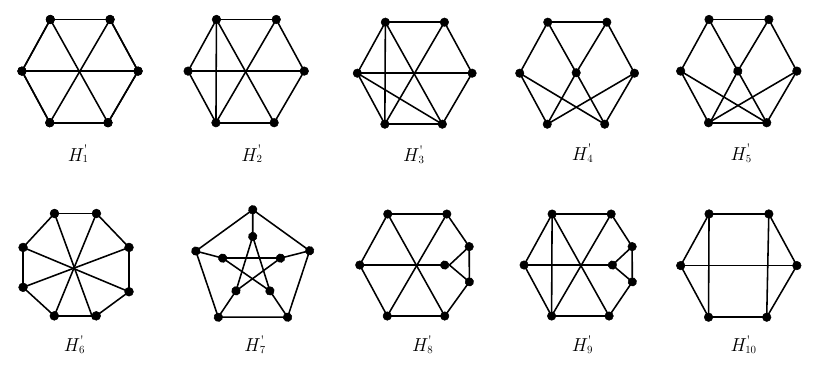}\\
  \caption{3-connected graphs other than complete graphs and wheels that do not contain any contractible
           non-edges}
  \label{fig1}
\end{figure}

Tsz Lung Chan~\cite{Tsz2} extended Theorem~\ref{thmA} and characterized all 3-connected graphs (finite or infinite) that does not contain any contractible non-edges. Furthermore, he derived a tight upper bound for the number of contractible non-edges.

\begin{thm}\cite{Tsz2}\label{thmB}
Every non-complete 3-connected graph (finite or infinite) neither isomorphic to a wheel nor isomorphic to one of the ten graphs in Fig.~\ref{fig1} contains a contractible non-edge. In particular, every non-complete 3-connected infinite graph contains a contractible non-edge.
\end{thm}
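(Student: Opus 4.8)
The proof divides according to whether $G$ is finite. If $G$ is finite, the assertion of Theorem~\ref{thmB} is exactly Theorem~\ref{thmA}, so there is nothing to prove; the real content is the last sentence, because an infinite graph is neither a (finite) wheel nor one of the ten graphs of Fig.~\ref{fig1}. So from now on let $G$ be an infinite $3$-connected graph, and suppose for contradiction that $G$ has no contractible non-edge.

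\textbf{Step 1 (from non-edges to separators).} If contracting a non-edge $\{u,v\}$ destroys $3$-connectivity, the resulting graph has a separator of order at most $2$; since $G$ is $3$-connected and $|V(G)|\ge 5$, this separator must contain the identification vertex and have order exactly $2$, which forces a vertex $w$ with $\{u,v,w\}$ a $3$-separator of $G$. Hence, under our hypothesis, every non-edge of $G$ has two of its three vertices lying in a common $3$-separator, and conversely a non-edge contained in no $3$-separator is contractible. In particular, since $G$ has a non-edge it has a $3$-separator (so $G$ is not $4$-connected), and the usual analysis of $3$-separators and the components of $G-T$ applies.

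\textbf{Step 2 (a vertex of degree $3$).} Among all pairs $(T,F)$, where $T$ is a $3$-separator containing a non-edge and $F$ is a \emph{finite} nonempty union of some but not all components of $G-T$, choose one with $|F|$ least, if any exist. If $|F|\ge 2$, a standard uncrossing argument, applied to $T$ and to a $3$-separator through a non-edge with an end in $F$ and using the submodularity of $3$-separations, produces such a pair with a strictly smaller $F$ --- a contradiction. Hence $|F|=1$, so $G$ has a vertex $c$ with $d_{G}(c)=3$ whose neighbourhood $N_{G}(c)=\{u,v,w\}$ is a $3$-separator containing a non-adjacent pair. The remaining possibility --- that no $3$-separator through a non-edge has any finite such side --- has to be handled separately, working inside $G-T$ for a fixed such $T$, all of whose components are infinite, and again descending to a non-edge lying in no $3$-separator; I expect this to be one of the two main obstacles.

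\textbf{Step 3 (contradiction with infiniteness).} Given the degree-$3$ vertex $c$, use that $N_{G}(c)$ has an infinite side $D$ in $G$. If $G$ has a second vertex $c'$ with $N_{G}(c')=N_{G}(c)$, then $\{c,c'\}$ is a non-edge whose contraction only deletes a trivial side and visibly keeps $G$ $3$-connected, a contradiction. Otherwise, follow $3$-separators nested ever more deeply into $D$: build an infinite ``laminar'' chain of $3$-separators $T_{0}=N_{G}(c),T_{1},T_{2},\dots$ with strictly growing small sides, take two whose small sides are disjoint, and recombine the corresponding pieces to produce a non-edge $\{x,y\}$ for which no $z$ makes $\{x,y,z\}$ a $3$-separator; by Step~1 this non-edge is contractible --- a contradiction. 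The hard work is precisely here: keeping the successive $3$-separators from crossing uncontrollably, ruling out the ``periodic'' infinite configurations (triangulations of a prism over a ray and the like) by exhibiting an explicit contractible non-edge in each, and verifying that the pieces used in the recombination remain $3$-connected. By comparison, the uncrossing bookkeeping in Step~2 and the fact that a false-twin contraction preserves $3$-connectivity are routine. Combining the contradiction just obtained with Theorem~\ref{thmA} yields Theorem~\ref{thmB}.
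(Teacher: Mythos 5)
First, note that the paper you are working against does not actually prove Theorem~\ref{thmB}: it is quoted from~\cite{Tsz2}, and the machinery that paper uses is precisely what is reproduced here as Lemmas~\ref{lem1}--\ref{lem4} (minimal $3$-fans, the semi-wheel/semi-prism classification, and Lemma~\ref{lem1}'s criterion that two vertices in different components of $G-V(H)$ for a $2$-connected subgraph $H$ form a contractible non-edge). In that approach the infinite case is comparatively cheap: a $3$-cut $S$ exists, some component $C$ of $G-S$ is infinite, a minimum-order $c$--$S$ fan $F$ is finite, so $(V(C)\cup S)-V(F)\neq\emptyset$, and Lemma~\ref{lem4}-type arguments immediately produce contractible non-edges. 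Your plan, by contrast, is a separator-uncrossing/laminar-family argument, which is a genuinely different route.

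The problem is that your route is not carried to completion, and the two places where it stalls are exactly the places where the real difficulty lives. In Step~2 you explicitly set aside the case in which every $3$-separator through a non-edge has only infinite sides; this case is not vacuous (think of one-way infinite ladder-like graphs) and nothing in your outline handles it. Even when finite fragments exist, the descent is not airtight: Mader's inequality (Lemma~\ref{lem5}) lets you uncross two $3$-separators and bound the corner boundaries, but the smaller fragment it produces need not have a boundary that \emph{contains a non-edge} (the new $3$-separator could induce a triangle), so the minimality argument as stated can fail to recurse. In Step~3 the entire construction --- building a nested chain of $3$-separators, extracting two with disjoint small sides, and ``recombining'' them into a non-edge lying in no $3$-separator --- is asserted rather than proved, and you yourself flag the exclusion of the periodic infinite configurations as an open obstacle. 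Since the conclusion of the theorem is exactly that such graphs always have a contractible non-edge, these admitted gaps are the theorem, not peripheral bookkeeping. Step~1 is correct and standard, and the false-twin observation is fine, but as it stands this is a plan for a proof, not a proof; to complete it along the paper's lines you would instead fix a $3$-cut $S$, take a minimum-order fan into each component, and use Lemmas~\ref{lem1}--\ref{lem4} to manufacture the contractible non-edge directly.
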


\begin{thm}\cite{Tsz2}\label{thmC}
Let $G$ be a 3-connected graph of order $n$ where $n\geq6$. Then the number of contractible non-edges is at most $\frac{n(n-5)}{2}$. All the extremal graphs with at least seven vertices are characterized to be 4-connected 4-regular.
\end{thm}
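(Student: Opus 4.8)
The plan is to turn the problem into an edge-counting problem for an auxiliary graph, and then estimate that count using structural consequences of 3-connectivity.

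\textbf{Step 1 (A dictionary via 3-cuts).} I would first prove: for a 3-connected graph $G$ with $n\ge6$, a non-edge $\{u,v\}$ is non-contractible if and only if $\{u,v\}$ is contained in some 3-cut of $G$. For the forward direction, in $G':=G/\{u,v\}$ with identified vertex $z$, any vertex cut $S$ of $G'$ with $|S|\le2$ must contain $z$ (otherwise $G-S$, a connected graph minus $\le2$ vertices, stays connected after the identification), and $|S|=1$ is impossible since $G$ has no 2-cut; so $S=\{z,b\}$ and $G-\{u,v,b\}$ is disconnected. The converse is immediate. Hence, letting $G^{\ast}$ be the graph obtained from $G$ by adding as edges precisely the non-edges of $G$ lying in a 3-cut, one gets
\[
\#\{\text{contractible non-edges of }G\}=\binom{n}{2}-|E(G^{\ast})|,
\]
so Theorem~\ref{thmC} is equivalent to the inequality $|E(G^{\ast})|\ge2n$, with equality holding exactly for the extremal graphs.

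\textbf{Step 2 (Local structure of $G^{\ast}$).} Since $G\subseteq G^{\ast}$, the graph $G^{\ast}$ is 3-connected and $\delta(G^{\ast})\ge3$. The key point is that if $d_{G^{\ast}}(v)=3$ then $G^{\ast}[\{v\}\cup N_{G^{\ast}}(v)]\cong K_{4}$: such a $v$ has $d_{G}(v)=3$ and lies in no 3-cut of $G$ (a degree-3 vertex in a 3-cut together with two of its own neighbours would yield a 2-cut), so $N_{G^{\ast}}(v)=N_{G}(v)$, which is a 3-cut of $G$, so all three of its non-edges are edges of $G^{\ast}$. From this I would deduce that the set $L$ of degree-3 vertices of $G^{\ast}$ is independent and that each such $K_{4}$ contains exactly one degree-3 vertex (two of them would let us disconnect the $K_{4}$ from the rest with a 2-cut, using $n\ge6$). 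Thus every $v\in L$ is joined to a triangle of $H:=G^{\ast}-L$, and in particular $\delta(H)\ge2$.

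\textbf{Step 3 (The edge estimate --- the crux).} Writing $\ell=|L|$, the inequality $|E(G^{\ast})|\ge2n$ is equivalent to $\sum_{w\notin L}\bigl(d_{G^{\ast}}(w)-4\bigr)\ge\ell$. I would prove this by a discharging argument on $G^{\ast}$: every vertex $w$ begins with charge $d_{G^{\ast}}(w)-4\ge-1$ (negative only on $L$), and each $v\in L$ pulls total charge $1$ from the triangle $N_{G^{\ast}}(v)$. The difficulty, which I expect to be the main obstacle, is that a single triangle can carry many vertices of $L$ and that its three vertices may have degree only $4$, so the naive pull overdraws them; one has to organise the analysis by the number $p(T)$ of vertices of $L$ attached to a triangle $T$, use that (outside a few small sporadic graphs) 3-connectivity forces at least three edges to leave the $K_{4}$ sitting on $T$, and control how the triangles $T$ overlap. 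A case split according to whether $p(T)=1$, $p(T)=2$, or $p(T)\ge3$, and whether $V(G^{\ast})$ is exhausted by $T$ and its pendant vertices, appears unavoidable.

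\textbf{Step 4 (Equality and why $n\ge7$).} If $|E(G^{\ast})|=2n$, all inequalities above are tight. If $G$ is $4$-regular but not $4$-connected it has a 3-cut; a parity argument forbids a \emph{triangular} 3-cut in a $4$-regular graph (in a component $C$ of $G^{\ast}$ minus such a triangle one would get $\sum_{w\in C}d_{G^{\ast}}(w)=4|C|=2e(C)+3$), so that 3-cut would contain a non-edge and $|E(G^{\ast})|>2n$, a contradiction; hence $G^{\ast}=G$ is $4$-connected and $4$-regular. If instead $G$ has a vertex of degree $3$, the rigidity forced by tightness pins down the local picture so sharply that, for $n\ge7$, it becomes contradictory. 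For $n=6$ the slack genuinely vanishes and sporadic extremal graphs appear (the octahedron $K_{2,2,2}$, $K_{6}$ minus a triangle, $K_{6}$ minus a $P_{4}$), which is precisely why the characterisation of extremal graphs is stated only for $n\ge7$.
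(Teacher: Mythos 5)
This statement is quoted from~\cite{Tsz2}; the paper you are reading gives no proof of it, so there is nothing in-house to compare against. Judged on its own terms, your attempt has a genuine gap at its centre. Step 1 is correct and gives a clean reformulation: a non-edge is non-contractible iff it lies in a 3-cut, so the theorem is equivalent to $|E(G^{\ast})|\geq 2n$. But the whole content of the theorem now sits in that inequality, i.e.\ in your Step 3, and Step 3 is not a proof --- it is a description of a discharging scheme together with an explicit admission that the scheme overdraws the charge of a triangle carrying several pendant degree-three vertices and that an uncompleted case analysis ``appears unavoidable''. That worry is well founded: if a triangle $T=\{a,b,c\}$ carries one vertex $v$ of $L$ and each of $a,b,c$ has exactly one further neighbour outside $T\cup\{v\}$ (the minimum that 3-connectivity forces), then $a,b,c$ all have $G$-degree $4$ and contribute zero charge, so the unit of charge needed by $v$ must be found elsewhere in the graph, possibly via added edges of $G^{\ast}$ far from $T$. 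Controlling this requires a genuinely global argument, and until it is supplied the bound $|E(G^{\ast})|\geq 2n$ --- and hence the theorem --- is unproven. Step 4 likewise only treats the easy half of the extremal characterisation (the $4$-regular case); the elimination of extremal graphs with a degree-three vertex for $n\geq7$ is again deferred to unspecified ``rigidity''.

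A secondary, repairable error: in Step 2 you justify $N_{G^{\ast}}(v)=N_{G}(v)$ by asserting that a degree-three vertex of $G$ lies in no 3-cut. That assertion is false --- in the wheel $W_{6}$ the rim vertex $r_{1}$ has degree three and $\{h,r_{1},r_{3}\}$ is a 3-cut. What you actually need (and what is true) is only that $d_{G^{\ast}}(v)=3$ forces $d_{G}(v)=3$ and hence $N_{G^{\ast}}(v)=N_{G}(v)$ directly from $G\subseteq G^{\ast}$ and $\delta(G)\geq3$; the fact that $N_{G}(v)$ is then a 3-cut all of whose internal non-edges become edges of $G^{\ast}$ follows as you say. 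So the conclusion of Step 2 stands, but not for the reason given. The decisive missing piece remains the counting inequality of Step 3.
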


Some results on 2-connected graphs and $k$-connected triangle-free graphs can be found in~\cite{Das, Kriesell2}. In~\cite{Tsz2}, Chan posed the problem of characterizing all 3-connected graphs that contain exactly one contractible non-edge. In this paper, we resolve this problem and obtain the following result.

\setcounter{thm}{0}
\renewcommand{\thethm}{\arabic{thm}}
\begin{thm}\label{thm1}
Let $G$ be a 3-connected graph. Then $G$ has exactly one contractible non-edge if and only if $G$ is isomorphic to $K_{n}^{-}$ where $n\geq 5$, or one of the four graphs in Fig.~\ref{fig2}.
\end{thm}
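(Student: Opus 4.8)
The first step is a structural reformulation of contractibility: \emph{for a non-edge $\{u,v\}$ of a $3$-connected graph $G$ (so $|V(G)|\ge5$), $\{u,v\}$ is contractible if and only if $G-\{u,v\}$ has no cut vertex; equivalently, no $3$-vertex cut of $G$ contains both $u$ and $v$.} The proof is routine: any vertex cut $S$ of $G/\{u,v\}$ with $|S|\le2$ must contain the vertex $z$ obtained by identifying $u$ and $v$ (otherwise re-splitting $z$ produces a cut of size $\le2$ in $G$), so $S=\{z,w\}$ and $G-\{u,v,w\}$ is disconnected, i.e.\ $w$ is a cut vertex of $G-\{u,v\}$; conversely such a $w$ gives the $2$-cut $\{z,w\}$ of $G/\{u,v\}$.

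\emph{Sufficiency} is by direct verification. If $G\cong K_n^-$ with $n\ge5$, its unique non-edge $\{u,v\}$ satisfies $G/\{u,v\}\cong K_{n-1}$, which is $3$-connected as $n-1\ge4$; hence $G$ has exactly one contractible non-edge. For each of the four graphs in Fig.~\ref{fig2}, one checks with the criterion that precisely one non-edge lies in no $3$-cut of the graph.

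\emph{Necessity.} Let $G$ be $3$-connected with exactly one contractible non-edge $\{u,v\}$. If $G$ has only one non-edge, then $G\cong K_{|V(G)|}^-$, and since $K_n^-$ is exactly $(n-2)$-connected this forces $|V(G)|\ge5$. So assume $G$ has at least two non-edges; by the criterion, $G-\{u,v\}$ has no cut vertex while every other non-edge of $G$ lies in a $3$-cut. The plan is to pass to the $3$-connected graph $G':=G/\{u,v\}$ and to prove the key claim that \emph{$G'$ has no contractible non-edge}. Granting this, Theorem~\ref{thmB} forces $G'$ to be complete, a wheel, or one of the ten graphs of Fig.~\ref{fig1}. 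Since $G$ is recovered from $G'$ by the inverse operation of splitting a single vertex $z$ into the non-adjacent pair $\{u,v\}$ --- that is, by choosing $N_G(u),N_G(v)$ with $N_G(u)\cup N_G(v)=N_{G'}(z)$, subject to $d_G(u),d_G(v)\ge3$ and to $G$ being $3$-connected --- the proof ends with an enumeration of such splits. When $G'\cong K_m$, one shows the only split with a single contractible non-edge is the ``full'' one $N_G(u)=N_G(v)=V(G')\setminus\{z\}$, which returns $G\cong K_{m+1}^-$ with $m\ge4$; every unbalanced split either fails to be $3$-connected or acquires a second contractible non-edge. When $G'$ is a wheel or one of the ten exceptional graphs, an analysis of the possible splits --- all sufficiently large wheels yielding only splits that are not $3$-connected or have a second contractible non-edge --- leaves exactly the four graphs of Fig.~\ref{fig2}.

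\emph{Main obstacle.} The crux is the key claim that $G':=G/\{u,v\}$ has no contractible non-edge. Starting from a hypothetical contractible non-edge $\{a,b\}$ of $G'$: if $z\notin\{a,b\}$ and no $3$-cut of $G$ containing $\{a,b\}$ separates $u$ from $v$, then $\{a,b\}$ is a non-edge of $G$ that one checks is again contractible, contradicting uniqueness; the harder sub-cases are when some $3$-cut of $G$ through $\{a,b\}$ does separate $u$ from $v$, or when $z\in\{a,b\}$, and there one must instead build a second contractible non-edge of $G$ out of the two ``sides'' of such a cut --- it is precisely the hypothesis ``$G-\{u,v\}$ has no cut vertex'' that rules out the degenerate configurations where this would fail. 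Managing this interaction between the (possibly many, mutually crossing) $3$-cuts of $G$ and the $2$-connectedness of $G-\{u,v\}$ is the delicate part, and it carries most of the case analysis, alongside the finite but care-demanding enumeration of vertex-splits of wheels and of the ten graphs of Fig.~\ref{fig1}. A self-contained alternative avoids $G'$ altogether: pick a pair (a $3$-cut $S$, a component $D$ of $G-S$) with $|D|$ minimum, use the low-degree vertex it produces together with crossing-$3$-cut (submodularity) arguments to bound $|V(G)|$, and finish by inspection --- the essential difficulty being the same.
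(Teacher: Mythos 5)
Your proposal is a strategy outline rather than a proof: the two places where you explicitly park the difficulty are exactly where all the mathematical content of the theorem lives, and neither is carried out. First, the key claim that $G':=G/\{u,v\}$ has no contractible non-edge is asserted but not proved. The easy direction you do sketch (a non-edge $\{a,b\}$ of $G'$ with $z\notin\{a,b\}$ pulls back to a non-contractible non-edge of $G$, hence lies in some $3$-cut $T$ of $G$) breaks down precisely when every such $T$ separates $u$ from $v$ into exactly two components, because then the image of $T$ is not a cut of $G'$; you name this sub-case and the sub-case $z\in\{a,b\}$ as ``the delicate part'' and leave them unresolved. Until that claim is established for an arbitrary $3$-connected $G$ with a unique contractible non-edge (not just verified a posteriori on the answer graphs, which would be circular), Theorem~\ref{thmB} cannot be invoked and the whole necessity argument has no starting point. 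Second, even granting the key claim, the reverse step --- enumerating all vertex-splits of $K_m$, of $W_n$ for every $n$, and of the ten graphs of Fig.~\ref{fig1} that are $3$-connected and have a unique contractible non-edge --- is not a finite check, since wheels form an infinite family; it needs a uniform argument (for instance, splitting the hub of $W_7$ into two independent triples of rim vertices yields a $3$-connected graph with at least two contractible non-edges, but one must rule out every split of every $W_n$, rim splits included). You assert the outcome of this enumeration without performing any of it.

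For comparison, the paper never forms the quotient $G'$. It fixes a $3$-cut $S$ of $G$, controls the number of components of $G-S$ via Lemmas~\ref{lem3} and~\ref{lem6}, and then analyses minimum-order fans in the components (Lemmas~\ref{lem2}, \ref{lem7}--\ref{lem10}) together with Mader's fragment inequality (Lemma~\ref{lem5}) to either manufacture a second contractible non-edge or pin down the graph. Your contract-and-unsplit route is a genuinely different and a priori attractive plan --- it would outsource the classification to Theorem~\ref{thmB} --- but as written it replaces the paper's case analysis with two unproved assertions of comparable difficulty. To salvage it you would need, at minimum, a complete proof of the key claim (the crossing-cut analysis you allude to, using the $2$-connectedness of $G-\{u,v\}$) and a written-out treatment of the splits of wheels of arbitrary order.
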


\begin{figure}
  \centering
  \includegraphics{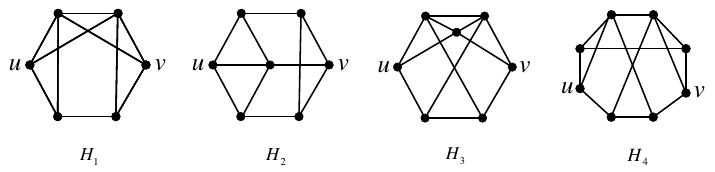}\\
  \caption{3-connected graphs other than $K_{n}^{-}(n\geq5)$ that contain exactly one contractible non-edge $\{u, v\}$}
  \label{fig2}
\end{figure}

\section{Preliminaries}
In this section, we introduce some more definitions and preliminary lemmas.

Let $G$ be a non-complete connected graph and let $\kappa(G)$ denote the vertex connectivity of $G$.  A \emph{cut} of a connected graph $G$ is a subset $V^{\prime}(G)$ of $V(G)$ such that $G-V^{\prime}(G)$ is disconnected. A \emph{k-cut} is a cut of $k$ elements. By $\mathscr{T}(G):=\{T\subseteq V(G): T$ is a cut of $G, |T|=\kappa(G)\}$, we denote the set of \emph{smallest cuts} of $G$. For $T\in\mathscr{T}(G)$, the union of the vertex sets of at least one but not of all components of $G-T$ is called a \emph{T-fragment} of $G$ or, briefly, a \emph{fragment}. Let $F$ be a $T$-fragment, and let $\overline{F}=V(G)-(F\cup T)$. Clearly, $\overline{F}\neq\emptyset$, and $\overline{F}$ is also a $T$-fragment.

Consider a path $P=x_{0} \dots x_{l}$ in $G$. For any two subsets $A$ and $B$ of $V(G)$, we call $P$ an \emph{A-B path} if $V(P)\cap A=\{x_{0}\}$ and $V(P)\cap B=\{x_{l}\}$. A set of $x$-$A$ paths is called a \emph{x-A fan} if any two of the paths have only $x$ in common. A family of $k$ internally disjoint $(x, Y)$-paths whose terminal vertices are distinct is referred to as a \emph{k-fan from x to Y}.
A \emph{semi-wheel} $SW_{n}$ of order $n$ is defined as $V(SW_{n})=\{x_{0}, x_{1}, \dots, x_{n-1}\}$ and
$E(SW_{n})=\{x_{1}x_{2}, x_{2}x_{3}, \dots, x_{n-2}x_{n-1}, x_{0}x_{2}, x_{0}x_{3}, \dots, x_{0}x_{n-2}\}$, where $x_{0}$ is its \emph{center}. A \emph{semi-prism} $SPr$ is defined as $V(SPr)=\{x, y, z, x^{\prime}, y^{\prime}, z^{\prime}\}$, and $E(SPr)=\{xy, xz, yz, xx^{\prime}, yy^{\prime}, zz^{\prime}\}$. For later use, we denote $S(SW_{n})=\{x_{0}, x_{1}, x_{n-1}\}$ and
$S(SPr)=\{x^{\prime}, y^{\prime}, z^{\prime}\}$.

The following four lemmas appear as Lemmas 3.1 to 3.4 in~\cite{Tsz2}.

\begin{lem}\label{lem1}
Let $G$ be a 3-connected graph, and let $H$ be a 2-connected subgraph of $G$. Suppose $u$ and $v$ are two vertices that lie in different components of $G-V(H)$. Then $\{u, v\}$ is a contractible non-edge.
\end{lem}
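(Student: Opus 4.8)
The plan is to argue by contradiction. Let $G'$ denote the (simple) graph obtained from $G$ by contracting the non-edge $\{u,v\}$ to a new vertex $z$; note first that $\{u,v\}$ genuinely is a non-edge, since $u$ and $v$ lie in different components of $G-V(H)$ and hence are non-adjacent. As $H$ is $2$-connected we have $|V(H)|\geq 3$, and since $u,v\notin V(H)$ this gives $|V(G)|\geq 5$, hence $|V(G')|\geq 4$; so if $G'$ were not $3$-connected it would possess a cut $S$ with $|S|\leq 2$. I would first dispose of the easy cases. If $z\notin S$, then $u,v\notin S$ and $G'-S$ is obtained from $G-S$ by identifying $u$ and $v$; since $|S|\leq 2<\kappa(G)$ the graph $G-S$ is connected, and identifying two vertices in a connected graph leaves it connected, so $G'-S$ is connected, a contradiction. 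Likewise, if $S=\{z\}$ then $G'-S=G-\{u,v\}$ is connected. Thus the only surviving possibility is $S=\{z,w\}$ with $w\in V(G)\setminus\{u,v\}$, and then $G-\{u,v,w\}$ is disconnected, so $T:=\{u,v,w\}$ is a $3$-cut of $G$.

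Now comes the heart of the argument, where the hypothesis on $H$ is used. Write the components of $G-T$ as $C_{1},\dots,C_{r}$ with $r\geq 2$, and let $D_{u},D_{v}$ be the (distinct) components of $G-V(H)$ containing $u$ and $v$. Since $u,v\notin V(H)$, at most the vertex $w$ of $T$ lies in $H$; because $H$ is $2$-connected, deleting at most one vertex from it leaves a connected graph, so $V(H)\setminus T$ is a nonempty set that induces a connected subgraph of $G$ and is therefore contained in a single component, say $C_{1}$. For any other component $C_{i}$ with $i\geq 2$ one has $C_{i}\cap V(H)=\emptyset$, and since $G$ is $3$-connected the set $N_{G}(C_{i})\subseteq T$ cannot be a cut of size $\leq 2$ (it would separate $C_{i}$ from the nonempty set $C_{1}$), forcing $N_{G}(C_{i})=T$; in particular both $u$ and $v$ have a neighbour in $C_{i}$. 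On the other hand $G[C_{i}]$ is a connected subgraph disjoint from $V(H)$, so $C_{i}$ lies inside a single component $D$ of $G-V(H)$. A neighbour of $u$ in $C_{i}$ lies in $D$ and, being a neighbour of $u$ outside $V(H)$, also lies in $D_{u}$; hence $D=D_{u}$. Applying the same reasoning to $v$ gives $D=D_{v}$, so $D_{u}=D_{v}$, contradicting the choice of $u$ and $v$. This contradiction shows $G'$ is $3$-connected, i.e.\ $\{u,v\}$ is a contractible non-edge.

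The routine part is the reduction to a single bad configuration: the case analysis on $S$ together with the remark that identifying two vertices preserves connectedness. The one place that requires care is the structural step above, namely showing that every component of $G-\{u,v,w\}$ other than the one containing $V(H)$ is entirely absorbed into a single component of $G-V(H)$, and that this component is forced to be simultaneously $D_{u}$ and $D_{v}$. The $2$-connectivity of $H$ is invoked exactly once, to guarantee that $V(H)$ minus a single vertex cannot be split by $T$; everything else is a direct application of $\kappa(G)\geq 3$ to the neighbourhoods $N_{G}(C_{i})$.
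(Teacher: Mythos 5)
Your proof is correct. The paper itself gives no proof of this statement---it is quoted as Lemma 3.1 of Chan's paper \cite{Tsz2}---and your argument (reduce any small cut of the contracted graph to a 3-cut $\{u,v,w\}$ of $G$, locate $V(H)\setminus\{w\}$ in one component via 2-connectedness of $H$, and force every other component to be adjacent to both $u$ and $v$, contradicting that they lie in different components of $G-V(H)$) is exactly the standard route and is complete.
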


\begin{lem}\label{lem2}
Let $G$ be a 3-connected graph, and let $S=\{x, y, z\}$ be a 3-cut of $G$. Consider a component $C$ of $G-S$. Let $F$ be a fan of minimum order among all $c-S$ fans in $G[V(C)\cup S]$ and among all vertices $c$ in $C$. If $V(F)=V(C)\cup S$, then $F^{\prime}:=G[V(F)]-\{xy, yz, zx\}$ is either a semi-wheel or a semi-prism, and $S(F^{\prime})=S$.
\end{lem}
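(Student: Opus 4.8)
The plan is to push the minimality of $F$ as far as possible. Since fans are taken inside $H:=G[V(F)]=G[V(C)\cup S]$ and any $c'$-$S$ fan of $H$ ($c'\in C$) lies in $V(C)\cup S$, it has at most $|V(C)|+3=|V(F)|$ vertices, so by minimality it has exactly that many; equivalently, $H$ has no $c'$-$S$ fan on fewer than $|V(C)|+3$ vertices, and every one of them spans $V(C)\cup S$. (For each $c'\in C$ such a fan exists, since a $2$-element separator of $H$ between $c'$ and $S$ would be a $2$-separator of $G$ — every path from $c'$ leaving $V(C)\cup S$ first meets $S$ — contradicting that $G$ is $3$-connected.) Two consequences are used throughout: (i) the three paths of any spanning fan are induced in $G$, since a chord would let one shorten a path into a smaller fan; (ii) $d_G(v)=3$ for every $v\in C$, because in a spanning $v$-$S$ fan each neighbour of $v$ lies on a path and, by (i), must be that path's first vertex, so $N_G(v)$ is exactly the set of three first vertices.

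Next I reduce to the case where one path of $F$ is a single edge. Write $P_x=ca_1\cdots a_px$, $P_y=cb_1\cdots b_qy$, $P_z=cd_1\cdots d_rz$, so $V(C)=\{c\}\cup\{a_i\}\cup\{b_i\}\cup\{d_i\}$ disjointly and, by (ii), $N_G(c)=\{a_1,b_1,d_1\}$; assume for contradiction $p\ge q\ge r\ge1$. By (i),(ii) each interior vertex $w$ of a path has a single neighbour $w^{\ast}$ off that path, lying on one of the other two. Looking at fans rooted at $a_p$ — whose path to $x$ is forced to be the edge $a_px$ — one gets $a_p^{\ast}\in\{b_1,d_1\}$: any other value produces a $c'$-$S$ fan on fewer than $|V(C)|+3$ vertices (if $a_p^{\ast}\in\{y,z\}$, one missing all of $\{b_i\}$ or all of $\{d_i\}$; if $a_p^{\ast}$ is an interior vertex of $P_y$ or $P_z$ beyond $b_1$ or $d_1$, one missing an initial segment of it). The same holds at $b_q$ and $d_r$, and a similar inspection at $d_1$ (together with the degree bound, which leaves $b_1,d_1$ only one free slot) forces $r=1$, then $p=q=r=1$, whereupon $a_p^{\ast}$ and $b_q^{\ast}$ together put a fourth neighbour on $a_1$ or $d_1$ — impossible. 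Hence $r=0$; relabelling, $P_z=cz$ with $c\sim z$ and $N_G(c)=\{a_1,b_1,z\}$.

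It remains to pin down $G[C]$ as a path or a triangle, with $S$ attached in one of the two prescribed patterns. If $A=B=\emptyset$ then $V(C)=\{c\}$, $c\sim x,y,z$, and $F'=K_{1,3}=SW_{4}$ with $S(F')=\{x,y,z\}=S$. Otherwise write $P_x=ca_1\cdots a_px$ with $p\ge1$, so $a_i^{\ast}\in(V(P_y)\cup\{z\})\setminus\{c\}$. If $B=\emptyset$ then $a_i^{\ast}\in\{y,z\}$, and rerouting shows $\{i:a_i^{\ast}=y\}$ and $\{i:a_i^{\ast}=z\}$ are both suffixes of $\{1,\dots,p\}$ (if one value occurred below the other on the way to $x$, the fan rooted at the higher vertex would miss $c$), so one of them is empty; with $a_i^{\ast}=y$ for all $i$, $y$ is adjacent to every vertex of $C$, $G[C]$ is the path $ca_1\cdots a_p$, $z\sim c$ and $x\sim a_p$, and $F'\cong SW_{p+4}$ with centre $y$ and $S(F')=S$. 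If $B\ne\emptyset$, write $P_y=cb_1\cdots b_qy$, $p\ge q\ge1$. One first obtains $a_p^{\ast}=z=b_q^{\ast}$ (any other value of $a_p^{\ast}$ makes some fan miss part of $\{b_i\}$, except $a_p^{\ast}=b_1$, which when $p\ge2$ makes the fan rooted at $b_1$ miss $a_1,\dots,a_{p-1}$; the case $p=q=1$ is treated separately). If $p\ge2$ this propagates: no $a_i^{\ast}$ is $b_1$ (else the fan rooted at $a_{i+1}$ misses $c$) and none is $y$ or an interior vertex of $P_y$ past $b_1$, so every $a_i^{\ast}=z$, and symmetrically every $b_i^{\ast}=z$; hence $z$ is adjacent to all of $C$, $G[C]$ is the path $a_p\cdots a_1cb_1\cdots b_q$ with $x\sim a_p$, $y\sim b_q$, and $F'\cong SW_{p+q+4}$ with centre $z$ and $S(F')=S$. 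Finally, if $p=q=1$, then $a_1^{\ast}\in\{b_1,z\}$: if $a_1^{\ast}=b_1$ then $G[C]$ is the triangle on $\{c,a_1,b_1\}$ with $c\sim z$, $a_1\sim x$, $b_1\sim y$, so $F'=SPr$; if $a_1^{\ast}=z$ then also $b_1^{\ast}=z$, $G[C]$ is the path $a_1cb_1$, and $F'=SW_{6}$; in both cases $S(F')=S$.

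The step I expect to be hardest is the case $B\ne\emptyset$ above (and, to a lesser extent, the reduction to one trivial path). In these, the off-path neighbours of interior vertices may link $P_x$ to $P_y$ (or to $P_z$), threatening a cycle in $G[C]$ long enough that $F'$ is neither a semi-wheel nor a semi-prism; each such configuration must be excluded by exhibiting a concrete $c'$-$S$ fan — rooted at a vertex adjacent to the offending edge — that omits some vertex of $C$, contradicting the minimality of $F$. This is elementary but demands a careful enumeration of the ways a $3$-fan can be routed through the long paths of $F$.
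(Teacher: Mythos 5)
The paper does not prove this lemma at all --- it is quoted verbatim from Chan's paper \cite{Tsz2} (Lemma 3.2 there), so there is no in-paper argument to compare yours against; I can only assess your proof on its own merits. Your approach is sound and, as far as I can check, correct. The two pillars --- that minimality plus $V(F)=V(C)\cup S$ forces \emph{every} $c'$-$S$ fan (over every root $c'\in C$) to span $V(C)\cup S$, and hence that every fan path is induced and every vertex of $C$ has degree exactly $3$ with a unique off-path neighbour $w^{\ast}$ --- are right, and the endgame in the $B\neq\emptyset$ case, though compressed, does go through: each claimed exclusion is witnessed by an explicit non-spanning fan of the kind you describe. The one place where your write-up is vaguer than it needs to be is the reduction to ``one path of $F$ is a single edge'': the chain ``forces $r=1$, then $p=q=r=1$'' is asserted rather than argued. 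It can be replaced by a cleaner and fully explicit step: the same rerouting that gives $a_p^{\ast}\in\{b_1,d_1\}$ shows that \emph{every} interior vertex's off-path neighbour is the first vertex of one of the other two paths (a value in $\{y,z\}$ omits all of $B$ or $D$, and a value $b_j$ or $d_j$ with $j\geq 2$ omits an initial segment); since $w^{\ast}$ is the unique off-path neighbour of $w$, the relation $w\mapsto w^{\ast}$ is symmetric, so in particular it restricts to a fixed-point-free involution of the three-element set $\{a_1,b_1,d_1\}$ --- impossible. This yields $r=0$ directly and avoids the intermediate claims. With that substitution, and with the routine (if tedious) verification of the listed fans in the $B\neq\emptyset$ propagation, your proof is complete.
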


\begin{lem}\label{lem3}
Let $G$ be a 3-connected graph, and let $S$ be a 3-cut of $G$. If $G-S$ consists of $k$ components where $k\geq4$, then $G$ contains at least ${k\choose 2}$ contractible non-edges.
\end{lem}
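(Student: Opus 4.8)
The plan is to exhibit, for each of the $\binom{k}{2}$ unordered pairs of components of $G-S$, a contractible non-edge having one endpoint in each of the two components; since the non-edges associated with two different pairs have their endpoints in different pairs of components, they are automatically distinct, and the count follows. Write $S=\{s_1,s_2,s_3\}$ and let $C_1,\dots,C_k$ (with $k\ge4$) be the components of $G-S$. Fix a pair $\{C_i,C_j\}$. The only tool I need for contractibility is Lemma~\ref{lem1}: if I can produce a $2$-connected subgraph $H$ of $G$ with $S\subseteq V(H)$ and $V(H)\cap(V(C_i)\cup V(C_j))=\emptyset$, then any $u\in V(C_i)$ and $v\in V(C_j)$ lie in different components of $G-V(H)$ (every $u$--$v$ path meets $S\subseteq V(H)$), so $\{u,v\}$ is a contractible non-edge.

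The construction of $H$ is the heart of the argument, and the naive choice $H=G[S\cup V(C_m)]$ fails: a component may attach to $S$ through a single cut vertex, so such an $H$ need not be $2$-connected. Instead I would use the fan structure between $S$ and the components. Because $k\ge4$, after deleting $C_i$ and $C_j$ there remain at least two further components; pick two of them, say $C_p$ and $C_q$. Since $G$ is $3$-connected and $|S|=3$, the Fan Lemma gives a $3$-fan from some vertex $c_p\in V(C_p)$ to $S$; as $S$ separates $C_p$ from the rest of $G$, every such path stays inside $G[V(C_p)\cup S]$ until it reaches $S$, so the fan lies in $G[V(C_p)\cup S]$ and its three paths end at the three distinct vertices $s_1,s_2,s_3$. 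Construct an analogous $3$-fan from a vertex $c_q\in V(C_q)$ to $S$ inside $G[V(C_q)\cup S]$, and let $H$ be the union of the two fans.

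Now $H$ is precisely a subdivision of $K_{2,3}$: its branch vertices are $c_p,c_q$ on one side and $s_1,s_2,s_3$ on the other, joined by six paths that are pairwise internally disjoint because the first fan has its interior in $C_p$, the second in $C_q$, and $V(C_p)\cap V(C_q)=\emptyset$. A subdivision of $K_{2,3}$ is $2$-connected, so $H$ is a $2$-connected subgraph of $G$ containing $S$ and meeting neither $C_i$ nor $C_j$. Applying the observation of the first paragraph yields a contractible non-edge between $C_i$ and $C_j$, completing the step for the pair $\{C_i,C_j\}$.

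Carrying this out for every pair and collecting the resulting non-edges gives $\binom{k}{2}$ pairwise distinct contractible non-edges. The one point that requires care---and the main obstacle---is establishing $2$-connectivity of a subgraph spanning $S$ while avoiding $C_i$ and $C_j$; I expect to handle it cleanly via the $K_{2,3}$-subdivision built from two auxiliary components, which is exactly why the hypothesis $k\ge4$ (guaranteeing at least two components besides $C_i$ and $C_j$) enters. The remaining verifications---that the Fan Lemma applies and each fan stays inside its component, the internal disjointness of the two fans, and the distinctness of the non-edges across pairs---are routine.
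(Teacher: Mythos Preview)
The paper does not give its own proof of this lemma; it is quoted verbatim (together with Lemmas~\ref{lem1}, \ref{lem2}, \ref{lem4}) from Chan~\cite{Tsz2} with the remark ``The following four lemmas appear as Lemmas 3.1 to 3.4 in~\cite{Tsz2}.'' So there is no in-paper argument to compare against.

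That said, your argument is correct and is exactly the natural one in this setting. The key step---building, for each pair $\{C_i,C_j\}$, a $2$-connected subgraph $H$ containing $S$ but avoiding $C_i\cup C_j$---is handled cleanly by taking the union of two $c$--$S$ $3$-fans coming from two \emph{other} components $C_p,C_q$, which exist because $k\ge4$. Your justification that each fan lies in $G[V(C_p)\cup S]$ (respectively $G[V(C_q)\cup S]$) is right: by the paper's definition of an $A$--$B$ path the internal vertices of each fan path avoid $S$, so the path cannot leave its component before reaching $S$. The union is then a subdivision of $K_{2,3}$, hence $2$-connected, and Lemma~\ref{lem1} applies. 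The distinctness of the $\binom{k}{2}$ non-edges is immediate since their endpoints lie in distinct pairs of components. There is no gap.
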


\begin{lem}\label{lem4}
Let $G$ be a 3-connected graph, and let $S$ be a 3-cut of $G$. Suppose $G-S$ consists of three components, denoted by $C_{1}$, $C_{2}$, $C_{3}$. Consider a $c_{1}-S$ fan $F_{1}$ in $G[V(C_{1})\cup S]$ where $c_{1}\in C_{1}$. If $(V(C_{1})\cup S)-V(F_{1})\neq\emptyset$, then $G$ contains at least two contractible non-edges.
\end{lem}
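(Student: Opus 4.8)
The plan is to produce two distinct contractible non-edges by applying Lemma~\ref{lem1} to two carefully chosen $2$-connected subgraphs. First I would record two elementary facts. Since $G$ is $3$-connected and $S=\{x,y,z\}$ is a $3$-cut, each vertex of $S$ must have a neighbour in every component $C_i$, for otherwise two vertices of $S$ would separate that component from the rest, contradicting $3$-connectivity; in particular each $C_i$ is joined to all of $x,y,z$. Moreover, a $c_1$-$S$ fan meets $S$ exactly in $\{x,y,z\}$, so $S\subseteq V(F_1)$, and the hypothesis $(V(C_1)\cup S)-V(F_1)\neq\emptyset$ then forces the existence of a vertex $w\in V(C_1)-V(F_1)$, which will serve as one endpoint of both non-edges.

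The heart of the argument is the claim that $H:=G[V(F_1)\cup V(C_2)]$ is $2$-connected. Since the fan is connected, $C_2$ is connected, and $x\in S\subseteq V(F_1)$ has a neighbour in $C_2$, the graph $H$ is connected, so it remains to verify that $H-t$ is connected for every $t\in V(H)$. I would organise this by the location of $t$: (i) if $t=c_1$, the three paths $P_x-c_1,\,P_y-c_1,\,P_z-c_1$ stay attached to $C_2$ through their endpoints $x,y,z$; (ii) if $t$ is an internal vertex of a fan path, that path splits into two subpaths, one reaching $c_1$ (and thence the other two endpoints) and one reaching its endpoint in $S$, both of which meet $C_2$; (iii) if $t\in S$, say $t=x$, the remnant of $P_x$ stays attached to $c_1$ while $C_2$ remains joined to the surviving $S$-vertices $y,z$; and (iv) if $t\in V(C_2)$, then every component $D$ of $C_2-t$ satisfies $N_G(D)\subseteq\{t\}\cup S$, so $3$-connectivity forces $D$ to have a neighbour in $S$ and hence to remain attached to the intact fan. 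The key structural point throughout is that the internal fan vertices lie in $C_1$ and therefore have no neighbours in $C_2$, so deleting them cannot sever $C_2$ from the rest; possible chords of the induced subgraph only aid connectivity. I expect case (iv) to be the main obstacle, since it is precisely where $3$-connectivity must be invoked to rule out a small cut hidden inside $C_2$.

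With the claim established the conclusion follows quickly. Because $S\cup V(C_2)\subseteq V(H)$, we have $G-V(H)=\bigl(V(C_1)-V(F_1)\bigr)\cup V(C_3)$, and since $C_1$ and $C_3$ are distinct components of $G-S$ there are no edges between $V(C_1)-V(F_1)$ and $V(C_3)$; thus $w$ and any $b_3\in V(C_3)$ lie in different components of $G-V(H)$, and Lemma~\ref{lem1} yields that $\{w,b_3\}$ is a contractible non-edge. Applying the identical reasoning to the symmetric subgraph $H':=G[V(F_1)\cup V(C_3)]$, which is $2$-connected by the same case analysis with the roles of $C_2$ and $C_3$ interchanged, produces a contractible non-edge $\{w,b_2\}$ with $b_2\in V(C_2)$. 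Finally, since $b_2\in V(C_2)$ and $b_3\in V(C_3)$ belong to different components we have $b_2\neq b_3$, so $\{w,b_2\}$ and $\{w,b_3\}$ are two distinct contractible non-edges, which completes the proof.
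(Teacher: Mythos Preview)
The paper does not actually prove this lemma: it is quoted verbatim from Chan's paper~\cite{Tsz2} (``The following four lemmas appear as Lemmas~3.1 to~3.4 in~\cite{Tsz2}''), so there is no in-paper argument to compare against. That said, your proposed proof is sound and is essentially the natural argument one would expect in Chan's setting.

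A couple of remarks on the write-up. In case~(iv) your parenthetical ``$3$-connectivity forces $D$ to have a neighbour in $S$'' deserves the one extra line you sketch only implicitly: since $N_G(D)\subseteq\{t\}\cup S$ separates the nonempty set $D$ from the nonempty set $V(C_1)\cup V(C_3)$, we get $|N_G(D)|\ge 3$ and hence $|N_G(D)\cap S|\ge 2$, which is what you need. In case~(iii) you might also note explicitly that $C_2$ stays attached to \emph{both} surviving vertices of $S$ (you established earlier that each of $x,y,z$ has a neighbour in $C_2$), so the fan and $C_2$ remain linked after deleting one vertex of $S$. Finally, for the application of Lemma~\ref{lem1} you should record that $\{w,b_2\}$ and $\{w,b_3\}$ are indeed non-edges (immediate since $w\in C_1$ while $b_i\in C_i$ for $i=2,3$), not merely that the two vertices lie in different components of $G-V(H)$. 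With these small clarifications the argument is complete.
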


The following lemma states an elementary property of fragments.

\begin{lem}\label{lem5}\cite{Mader}
Let $F$ be a $T$-fragment and $F^{\prime}$ be a $T^{\prime}$-fragment. If $F\cap F^{\prime}\neq\emptyset$, then $|F\cap T^{\prime}|\geq|\overline{F^{\prime}}\cap T|$.
\end{lem}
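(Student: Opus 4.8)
The plan is to treat this as the standard ``crossing fragments'' submodularity argument. Write $A := F \cap F^{\prime}$, and recall that since $T, T^{\prime}\in\mathscr{T}(G)$ we have $|T|=|T^{\prime}|=\kappa(G)$. The whole vertex set decomposes into the nine cells $X\cap X^{\prime}$ with $X\in\{F,T,\overline{F}\}$ and $X^{\prime}\in\{F^{\prime},T^{\prime},\overline{F^{\prime}}\}$. I would introduce the candidate separator
$$S := (F\cap T^{\prime})\cup(T\cap T^{\prime})\cup(T\cap F^{\prime}),$$
the union of the three cells surrounding the corner $A$ that lie inside $T\cup T^{\prime}$, and the whole argument reduces to showing that $S$ is a cut of $G$ and then comparing $|S|$ with $|T|$.

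The key step is to locate the neighbours of $A$. Since $F$ is a union of components of $G-T$, every vertex of $F$ has all its neighbours in $F\cup T$; likewise every vertex of $F^{\prime}$ has all its neighbours in $F^{\prime}\cup T^{\prime}$. Hence for $a\in A=F\cap F^{\prime}$ we get $N_{G}(a)\subseteq(F\cup T)\cap(F^{\prime}\cup T^{\prime})$, and expanding this intersection yields exactly $A\cup S$. Therefore $N_{G}(A)\subseteq A\cup S$, so no edge of $G$ leaves $A$ except into $S$; equivalently, $A$ is a union of components of $G-S$ (note $A\cap S=\emptyset$ since $A\subseteq F\cap F^{\prime}$ while each cell of $S$ meets $T\cup T^{\prime}$).

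It remains to check that $S$ genuinely disconnects $G$ and then to read off the inequality. Because $S\subseteq F\cup T$ it is disjoint from $\overline{F}=V(G)-(F\cup T)$, and $A\subseteq F$ is disjoint from $\overline{F}$ as well; since $\overline{F}\neq\emptyset$, the set $V(G)-(A\cup S)$ contains $\overline{F}$ and is nonempty. As $A\neq\emptyset$ by hypothesis and $A$ is a union of components of $G-S$ while $\overline{F}$ lies outside it, $G-S$ is disconnected, so $|S|\geq\kappa(G)=|T|$. Finally, partitioning $T$ by the $T^{\prime}$-fragmentation gives $|T|=|T\cap F^{\prime}|+|T\cap T^{\prime}|+|T\cap\overline{F^{\prime}}|$, while the three cells of $S$ are pairwise disjoint, so $|S|=|F\cap T^{\prime}|+|T\cap T^{\prime}|+|T\cap F^{\prime}|$. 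Substituting into $|S|\geq|T|$ and cancelling the common terms $|T\cap T^{\prime}|$ and $|T\cap F^{\prime}|$ leaves $|F\cap T^{\prime}|\geq|T\cap\overline{F^{\prime}}|=|\overline{F^{\prime}}\cap T|$, as required.

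The main (and essentially only) obstacle is confirming that $S$ is a genuine cut rather than all of $V(G)$, i.e.\ that the region opposite to $A$ is nonempty; this is exactly where the hypothesis that $\overline{F}$ is a nonempty fragment is used. Once $|S|\geq\kappa(G)$ is secured, the conclusion is pure set arithmetic, and one should note that the hypothesis $F\cap F^{\prime}\neq\emptyset$ enters only to guarantee $A\neq\emptyset$ so that $S$ separates two nonempty sets.
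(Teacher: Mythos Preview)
Your argument is the standard ``crossing cuts'' submodularity proof and is correct as written: the neighbourhood computation $N_G(A)\subseteq A\cup S$, the nonemptiness of $\overline{F}$ to witness that $S$ is a genuine cut, and the cell-counting to extract $|F\cap T'|\geq|\overline{F'}\cap T|$ are all in order. The paper itself does not prove this lemma; it simply cites Mader~\cite{Mader}, so there is no in-paper proof to compare against, but what you have supplied is precisely the classical argument one finds in that literature.
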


\section{Some further Lemmas}
Throughout this section, we adopt the following assumptions and definitions. Let $G$ be a 3-connected graph, and let $S=\{x, y, z\}$ be a 3-cut of $G$. If $C_{i}$ is a component of $G-S$, we denote $F_{i}$ to be a 3-fan of minimum order among all 3-fans from $c_{i}$ to $S$ in $G[V(C_{i})\cup S]$ and among all vertices $c_{i}$ in $C_{i}$. Define $F_{i}^{\prime}=G[V(F_{i})]-\{xy, xz, yz\}$. If $F_{i}^{\prime}$ is a semi-prism, we define $V(F_{i}^{\prime})=\{x_{i}, y_{i}, z_{i}, x, y, z\}$ and $E(F_{i}^{\prime})=\{x_{i}y_{i}, y_{i}z_{i}, z_{i}x_{i}, x_{i}x, y_{i}y, z_{i}z\}$. If $F_{i}^{\prime}$ is a semi-wheel of order $k$, we define $V(F_{i}^{\prime})=\{f(i)_{0}, f(i)_{1}, \dots, f(i)_{k-1}\}$ and $E(F_{i}^{\prime})=\{f(i)_{1}f(i)_{2}, f(i)_{2}f(i)_{3}, \dots, f(i)_{k-2}f(i)_{k-1}, f(i)_{0}f(i)_{2}, f(i)_{0}f(i)_{3}, \dots, f(i)_{0}f(i)_{k-2}\}$.

\begin{lem}\label{lem6}
If $G-S$ consists of three components, then $G$ either has at least two contractible non-edges or is isomorphic to $H_{1}^{\prime}$, $H_{2}^{\prime}$, $H_{4}^{\prime}$, or $H_{8}^{\prime}$ as shown in Fig. \ref{fig1}.
\end{lem}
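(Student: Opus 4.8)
The plan is to analyze the structure of $G$ when $G-S$ has exactly three components $C_1, C_2, C_3$, starting from the fans $F_i$ and the graphs $F_i'$ introduced above. First I would invoke Lemma~\ref{lem2}: if for some $i$ we had $V(F_i) = V(C_i) \cup S$, then $F_i'$ is a semi-wheel or semi-prism with $S(F_i') = S$. On the other hand, if $(V(C_i) \cup S) - V(F_i) \neq \emptyset$ for some $i$, then by Lemma~\ref{lem4} we immediately get at least two contractible non-edges and are done. So the main case is that $V(F_i) = V(C_i) \cup S$ for \emph{every} $i \in \{1,2,3\}$, i.e.\ each $G[V(C_i) \cup S]$ minus the triangle on $S$ is a semi-wheel or a semi-prism attached along $S$. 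The task then reduces to understanding how three such ``gadgets'' can be glued along the common boundary $S$ to form a $3$-connected graph with at most one contractible non-edge, and to show the only possibilities are $H_1', H_2', H_4', H_8'$.

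Next I would set up the case distinction according to how many of $F_1', F_2', F_3'$ are semi-prisms versus semi-wheels, and, for the semi-wheel pieces, whether the center $f(i)_0$ is one of $x,y,z$ or lies in $C_i$, together with which two of $x,y,z$ play the roles of the two ``ends'' $f(i)_1, f(i)_{k-1}$. In each configuration I would locate a contractible non-edge directly: a natural source is Lemma~\ref{lem1}, applied with $H$ a suitable $2$-connected subgraph (for instance a cycle through $S$ inside one gadget, or the union of one gadget with part of $S$) so that vertices from two different $C_i$'s lie in different components of $G - V(H)$ — this typically yields many contractible non-edges unless the gadgets are very small. This forces each $C_i$ to be tiny: essentially $|C_i| \le 2$, with the semi-wheels of order $4$ or $5$ and the semi-prisms exactly $SPr$. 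A secondary source of contractible non-edges is to find a second $3$-cut $S'$ (e.g.\ replacing one vertex of $S$ by a vertex of some $C_i$ adjacent to the other two) and apply Lemma~\ref{lem3} or Lemma~\ref{lem4} to $S'$; comparing $S$-fragments with $S'$-fragments via Lemma~\ref{lem5} controls how the pieces interact. After pruning, only finitely many candidate graphs remain, and I would check each by hand: count its contractible non-edges, discard those with two or more, and identify the survivors with $H_1', H_2', H_4', H_8'$.

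The main obstacle, I expect, is the bookkeeping in the ``each piece is a small gadget'' case: there are several ways to identify the boundary vertices $x,y,z$ with the distinguished vertices of a semi-wheel or semi-prism, the three pieces can be of mixed types, and one must be careful that a non-edge counted as contractible in one gadget is not accidentally non-contractible because of edges contributed by another gadget (e.g.\ if $u \in C_1$ and $v \in C_2$ are both adjacent to all of $S$, then $\{u,v\}$ may fail to be contractible). So the delicate point is to argue that after contracting a candidate non-edge the new $3$-cuts are exactly the old $3$-cuts of $G$ not separating $u$ from $v$, which requires knowing all $3$-cuts of these small glued graphs. I would handle this by first proving a sublemma enumerating the $3$-cuts of $G$ under the standing hypothesis (each $F_i' $ a semi-wheel/semi-prism filling $C_i\cup S$), likely showing every $3$-cut either equals $S$, or lies within a single $V(C_i)\cup S$ and is forced by the gadget structure; with that in hand the verification for each finite candidate is routine. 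The other steps — invoking Lemmas~\ref{lem1}--\ref{lem5} and the finite case check — are comparatively mechanical.
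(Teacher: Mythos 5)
Your plan follows essentially the same route as the paper: Lemma~\ref{lem4} disposes of the case where some fan fails to cover its component, Lemma~\ref{lem2} forces each $G[V(C_i)\cup S]$ minus the triangle on $S$ to be a semi-wheel or semi-prism with $S(F_i')=S$, and the remaining finite case analysis is settled by exhibiting suitable $2$-connected subgraphs and applying Lemma~\ref{lem1} --- exactly as in the paper, whose cases are organized by how many of the $F_i'$ are semi-wheels of order four (after first noting that $G[S]$ can have at most one edge, since two edges in $G[S]$ make every $G[V(F_i)]$ $2$-connected and yield at least three contractible non-edges). Two small remarks: by Lemma~\ref{lem2} the center of each semi-wheel necessarily lies in $S$, so your subcase ``center in $C_i$'' is vacuous, and the contractibility certificates come directly from Lemma~\ref{lem1}, so the proposed sublemma enumerating all $3$-cuts of the glued graphs is unnecessary overhead.
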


\begin{proof}
Suppose that $G$ has at most one contractible non-edge. The three components are denoted by $C_{1}, C_{2}, C_{3}$. If $(V(C_{i})\cup S)-V(F_{i})\neq\emptyset$ for some $i\in\{1, 2, 3\}$, then by Lemma~\ref{lem4}, $G$ contains at least two contractible non-edges, a contradiction. So we assume that for $i=1, 2, 3$, $V(C_{i})\cup S=V(F_{i})$. Then by Lemma \ref{lem2}, for $i=1,2,3$, $F_{i}^{\prime}$ is either a semi-prism or a semi-wheel, and $S(F_{i}^{\prime})=\{x, y, z\}$. If $G[S]$ contains at least two edges, then each $G[V(F_{i})]$ is 2-connected, implying that any two vertices in distinct components are contractible non-edges. Thus, $G$ has at least three contractible non-edges, a contradiction. Therefore, we assume that $G[S]$ has at most one edge. Let $f(1)=a$, $f(2)=b$ and $f(3)=u$.

{\bf Case 1.} At most one of $F_{1}^{\prime}$, $F_{2}^{\prime}$ and $F_{3}^{\prime}$ is a semi-wheel of order four.

That is, at least two of them are semi-primes or semi-wheels of order at least five. Without loss of generality, we assume these to be $F_{1}^{\prime}$ and $F_{2}^{\prime}$.
Suppose that $F_{1}^{\prime}$ and $F_{2}^{\prime}$ are semi-wheels of orders $k$ and $l$, respectively.
Regardless of whether the centers of $F_{1}^{\prime}$ and $F_{2}^{\prime}$ are the same, we can have that $G-\{a_{2}, b_{l-2}\}$ and $G-\{a_{k-2}, b_{2}\}$ are 2-connected, implying that $\{a_{2}, b_{l-2}\}$ and $\{a_{k-2}, b_{2}\}$ are contractible by Lemma \ref{lem1}.
Now, we suppose that $F_{1}^{\prime}$ is a semi-prime. If $F_{2}^{\prime}$ is also a semi-prime, then it is obvious that $\{x_{1}, z_{2}\}$ and $\{z_{1}, x_{2}\}$ are contractible. If $F_{2}^{\prime}$ is a semi-wheel of order $l$, we assume $b_{0}=x$ and $b_{1}=y$ without loss of generality. Then we find that $\{x_{1}, b_{l-2}\}$ and $\{z_{1}, b_{2}\}$ are both contractible similarly. In each case, there is a contradiction.

{\bf Case 2.} At least two of $F_{1}^{\prime}$, $F_{2}^{\prime}$ and $F_{3}^{\prime}$ are semi-wheels of order four.

Without loss of generality, we assume that $F_{1}^{\prime}$ and $F_{2}^{\prime}$ are the semi-wheels of order four. Let $F_{3}^{\prime}$ be a semi-prime. If $G[S]$ has one edge, without loss of generality assume it to be $xy$, then $\{a_{2}, x_{3}\}$ and $\{b_{2}, x_{3}\}$ are contractible non-edges of $G$. Therefore, $G[S]$ has no edges, and thus $G\cong H_{8}^{\prime}$.
Let $F_{3}^{\prime}$ be a semi-wheel of order $r$. Without loss of generality, we assume that $u_{0}=x$ and $u_{1}=y$.
If $r\geq6$, then $yb_{2}zu_{r-2}xu_{2}y$ is 2-connected, implying that $\{a_{2}, u_{3}\}$ is contractible by Lemma \ref{lem1}. Similarly, we have that $\{b_{2}, u_{3}\}$ is contractible, then $G$ has at least two contractile non-edges, a contradiction.
If $r=4$, then $G\cong H_{1}^{\prime}$ when $G[S]$ has no edges, and  $G\cong H_{2}^{\prime}$ when $G[S]$ contains one edge.
Consider the case $r=5$. If $G[S]$ contain one edge, then we can get that $G$ has at least two contractile non-edges. So $G[S]$ contains no edges, implying that $G\cong H_{4}^{\prime}$.
\end{proof}

In the following four lemmas, we assume that $G-S$ consists of two components, denoted by $C_{1}$ and $C_{2}$. The three paths in $F_{i}$ starting from $c_{i}$ to $x, y, z$ are denoted by $P_{x}^{i}, P_{y}^{i}, P_{z}^{i}$ respectively. Let $f(1)=a$ and $f(2)=b$.

\begin{lem}\label{lem7}
If $V(C_{1})\cup S\neq V(F_{1})$ and $V(C_{2})\cup S\neq V(F_{2})$, then $G$ has at least two contractible non-edges.
\end{lem}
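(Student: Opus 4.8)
The idea is to produce two contractible non-edges by repeatedly applying Lemma~\ref{lem1}; for this it suffices to exhibit a $2$-connected subgraph $H$ of $G$ with $S\subseteq V(H)$ for which $G-V(H)$ contains at least two pairs of vertices lying in different components. I begin with two observations. (a)~$|V(C_i)|\ge 2$ for $i=1,2$: if $C_i=\{c\}$ then $N_G(c)=S$ since $G$ is $3$-connected, so the edges $cx,cy,cz$ form a $3$-fan with vertex set $V(C_i)\cup S$, contradicting $V(F_i)\neq V(C_i)\cup S$. (b)~The fan $F_i$ consists of three internally disjoint paths from $c_i$ to the distinct vertices $x,y,z$, and its internal vertices all lie in $C_i$; hence $F_i$ is a tree whose set of leaves is exactly $\{x,y,z\}$, so for every vertex $t$ of $F_i$ each component of $F_i-t$ still meets $S$. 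Since $G[V(F_i)]$ contains $F_i$ as a spanning subgraph, each component of $G[V(F_i)]-t$ also meets $S$.

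\textbf{Case 1.} $G[V(F_1)]$ or $G[V(F_2)]$ is $2$-connected. Say $G[V(F_1)]$ is $2$-connected. By hypothesis $V(F_1)\subsetneq V(C_1)\cup S$, so $X:=(V(C_1)\cup S)\setminus V(F_1)$ is a non-empty subset of $V(C_1)$; since $S\subseteq V(F_1)$, the vertices of $X$ lie in components of $G-V(F_1)$ different from those containing vertices of $C_2$. Hence, by Lemma~\ref{lem1}, every pair $\{p,q\}$ with $p\in X$ and $q\in V(C_2)$ is a contractible non-edge, and by (a) there are at least $|X|\cdot|V(C_2)|\ge 2$ of them.

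\textbf{Case 2.} Neither $G[V(F_1)]$ nor $G[V(F_2)]$ is $2$-connected. Put $H:=G[V(F_1)\cup V(F_2)]$; since $F_i\subseteq G[V(C_i)\cup S]$ and $C_1\cap C_2=\emptyset$, we have $V(F_1)\cap V(F_2)=S$. I claim $H$ is $2$-connected. If $t\in V(F_1)\setminus S$, then by (b) every component of $F_1-t$ meets $S$, hence is joined in $H-t$ to the connected graph $G[V(F_2)]\supseteq G[S]$, so $H-t$ is connected; the case $t\in V(F_2)\setminus S$ is symmetric; and if $t\in S$, then $H-t$ contains the two connected graphs $F_1-t$ and $F_2-t$, which share the remaining two vertices of $S$, so again $H-t$ is connected. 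Now pick $w_i\in V(C_i)\setminus V(F_i)$; then $w_1,w_2\notin V(H)$, they lie in different components of $G-S$ and hence of $G-V(H)$, so $\{w_1,w_2\}$ is a contractible non-edge. If $|V(C_1)\setminus V(F_1)|\ge 2$ or $|V(C_2)\setminus V(F_2)|\ge 2$ then, as in the previous sentence, a second such pair $\{w_1',w_2\}$ or $\{w_1,w_2'\}$ arises, distinct from $\{w_1,w_2\}$, and we are done. Otherwise $V(F_i)=(V(C_i)\cup S)\setminus\{w_i\}$ for $i=1,2$, so $G[V(F_i)]=G[(V(C_i)\cup S)\setminus\{w_i\}]$; if some $v\in V(C_1)$ has $G[(V(C_1)\cup S)\setminus\{v\}]$ still $2$-connected (necessarily $v\neq w_1$), then applying the argument of Case~1 to this $2$-connected subgraph yields the contractible non-edges $\{v,q\}$, $q\in V(C_2)$, at least two of them and none equal to $\{w_1,w_2\}$; likewise if such a $v$ exists in $V(C_2)$.

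So it remains to treat the case in which, for $i=1,2$, we have $V(F_i)=(V(C_i)\cup S)\setminus\{w_i\}$, the graph $G[(V(C_i)\cup S)\setminus\{w_i\}]$ is not $2$-connected, and no single vertex of $C_i$ can be removed from $G[V(C_i)\cup S]$ while keeping it $2$-connected. I expect this to be the main obstacle. The plan is to fix a cut vertex $t_i$ of $G[(V(C_i)\cup S)\setminus\{w_i\}]$ and use $\kappa(G)=3$: every component of $G[(V(C_i)\cup S)\setminus\{w_i,t_i\}]$ must meet $S$ (a component missing $S$ would be separated from $G$ by $\{w_i,t_i\}$), so there are two or three such components, each carrying one or two vertices of $S$; whenever a component $E$ has $E\cap S=\{s\}$ and $E\setminus\{s\}\neq\emptyset$, the set $\{w_i,t_i,s\}$ is a new $3$-cut of $G$ one side of which lies in $C_i$. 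One then argues that either some such $3$-cut splits $G$ into at least four parts and Lemma~\ref{lem3} applies, or it splits $G$ into three parts and Lemma~\ref{lem4} together with the fan/semi-wheel/semi-prism description of Lemma~\ref{lem2} delivers a contractible non-edge that is checked to differ from $\{w_1,w_2\}$, or else the configuration is forced to be one of finitely many very small graphs (for instance $G[V(C_i)\cup S]\cong K_{2,3}$, or $|V(C_i)|=2$ with $F_i$ a star and $G[S]$ having at most one edge), which are dealt with by direct inspection. The delicate points are ensuring that the second contractible non-edge found is genuinely new and handling these degenerate cases.
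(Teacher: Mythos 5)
Your observations (a)--(b), your Case 1, and the first half of your Case 2 are sound, and they reproduce the easy part of the paper's argument: the union $H=G[V(F_1)\cup V(F_2)]$ is $2$-connected, every pair $\{u,v\}$ with $u\in (V(C_1)\cup S)\setminus V(F_1)$ and $v\in (V(C_2)\cup S)\setminus V(F_2)$ is contractible by Lemma~\ref{lem1}, and hence the lemma follows at once unless both residual sets are singletons. But that residual-singleton situation is precisely where the entire content of the lemma lies, and your proposal does not prove it: the last paragraph is an announced plan (``I expect this to be the main obstacle'', ``The plan is\dots'', ``dealt with by direct inspection''), not an argument. Concretely, you never produce a second contractible non-edge when $V(F_i)=(V(C_i)\cup S)\setminus\{w_i\}$ for $i=1,2$ and no one-vertex-deleted subgraph $G[(V(C_i)\cup S)\setminus\{v\}]$ is $2$-connected; this case is non-vacuous (e.g.\ $G[V(C_i)\cup S]\cong K_{2,3}$ with $C_i$ the side of size two realises it), the reduction to Lemmas~\ref{lem2}--\ref{lem4} is not carried out, and the ``finitely many very small graphs'' are neither enumerated nor inspected. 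As it stands the proof is incomplete.

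For comparison, the paper closes this gap with a short direct argument rather than a structural case analysis: with $\{a\}$ and $\{b\}$ the two residual singletons, $\{a,b\}$ is one contractible non-edge, and one then examines the non-edge $\{c_1,b\}$ (fan centre of $C_1$ versus the residual vertex of $C_2$). If it is contractible we are done; otherwise a smallest cut $T\supset\{c_1,b\}$ exists, and a fragment argument via Lemma~\ref{lem5} forces $T=\{c_1,b,t\}$ with $t\in C_2$, $S\cap T=\emptyset$, and one vertex of $S$ (say $x$) alone on one side, which puts $a$ on the other side; the cycle $yP_{y}^{1}c_{1}P_{z}^{1}zP_{z}^{2}c_{2}P_{y}^{2}y$ is then a $2$-connected subgraph separating $x$ from $a$, so $\{x,a\}$ is a second contractible non-edge by Lemma~\ref{lem1}. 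If you want to salvage your write-up, replacing your final paragraph with an argument of this kind (test a specific second non-edge, and exploit the structure of any cut witnessing its non-contractibility) is the missing step.
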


\begin{proof}
Clearly, $G[V(F_{1})\cup V(F_{2})]$ is 2-connected. Then, for each $u\in (V(C_{1})\cup S)-V(F_{1})$ and $v\in (V(C_{2})\cup S)-V(F_{2})$, the non-edge $\{u, v\}$ is contractible by Lemma~\ref{lem1}. This implies that the lemma holds if $|(V(C_{1})\cup S)-V(F_{1})|\geq2$ or $|(V(C_{2})\cup S)-V(F_{2})|\geq2$. So we assume that $|(V(C_{1})\cup S)-V(F_{1})|=|(V(C_{2})\cup S)-V(F_{2})|=1$. Let $(V(C_{1})\cup S)-V(F_{1})=\{a\}$ and $(V(C_{2})\cup S)-V(F_{2})=\{b\}$. Note that $\{a, b\}$ is a contractible non-edge of $G$.
If $\{c_{1}, b\}$ is not contractible, then there exists $T\in\mathscr{T}(G)$ such that $T\supset\{c_{1}, b\}$. Let $B_{1}$ be a $T$-fragment and let $B_{2}=G-T-B_{1}$. If $S\cap B_{1}=\emptyset$, then Lemma \ref{lem5} assures us $C_{1}\cap B_{1}=C_{2}\cap B_{1}=\emptyset$, and thus, $B_{1}=\emptyset$, a contradiction. So $S\cap B_{1}\neq\emptyset$. Similarly, $S\cap B_{2}\neq\emptyset$. It follows $|C_{2}\cap T|=2$, and hence, $S\cap T=\emptyset$ and $C_{1}\cap T=\{c_{1}\}$. Without loss of generality, we assume $S\cap B_{1}=\{x\}$.
By Lemma \ref{lem5}, we have $C_{1}\cap B_{1}=\emptyset$. Thus, $a\in C_{1}\cap B_{2}$. This implies that $yP_{y}^{1}c_{1}P_{z}^{1}zP_{z}^{2}c_{2}P_{y}^{2}y$ is 2-connected, and $x$ and $a$ lie in different components. By Lemma \ref{lem1}, $\{x, a\}$ is a contractible non-edge. It follows that either $\{c_{1}, b\}$ or $\{x, a\}$ is a contractible non-edge of $G$, and hence, the lemma holds.
\end{proof}

\begin{lem}\label{lem8}
If $V(C_{1})\cup S=V(F_{1})$ and $V(C_{2})\cup S=V(F_{2})$, then $G$ either has at least two contractible non-edges or is isomorphic to $K_{5}^{-}$, $H_{1}$, $H_{2}$, $H_{10}^{\prime}$, or $W_{n}$ for $n\geq 5$.
\end{lem}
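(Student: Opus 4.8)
We are in the case where $G-S$ has exactly two components $C_1, C_2$, and each fan $F_i$ already exhausts its side, so by Lemma~\ref{lem2} each $F_i'$ is a semi-wheel or a semi-prism with $S(F_i')=S$. As in the proof of Lemma~\ref{lem6}, if $G[S]$ has at least two edges then both $G[V(F_i)]$ are $2$-connected, so every pair of vertices in distinct components is a contractible non-edge by Lemma~\ref{lem1}; counting these already gives at least two such non-edges unless both sides are extremely small. So the plan is: first dispose of the case $|E(G[S])|\ge 2$ by a direct count (the only survivors have each $|C_i|=1$, i.e.\ $|V(G)|=5$, which must be $K_5^-$ or one of the tiny wheels), and then assume $|E(G[S])|\le 1$ throughout the rest.

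**The main case analysis.**

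With $|E(G[S])|\le 1$, I would split on the isomorphism types of $F_1'$ and $F_2'$, exactly paralleling the structure of Lemma~\ref{lem6}'s proof but now with only two components. The sub-cases are: (i) both semi-prisms; (ii) one semi-prism, one semi-wheel; (iii) both semi-wheels, where a semi-wheel of order $4$ (which is just a path $x\,c_i\,z$ plus the edge from $c_i$ to $y$, i.e.\ a ``cherry'') behaves specially and must be separated out. In each sub-case the strategy is the same: identify two vertices lying in distinct components of $G-V(H)$ for a suitable $2$-connected subgraph $H$ built from the two fans minus $S$-edges, and invoke Lemma~\ref{lem1} to produce two contractible non-edges. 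For instance if $F_1'$ is a semi-wheel of order $k\ge 5$ and $F_2'$ a semi-wheel of order $l\ge 5$, then deleting $\{a_2,b_{l-2}\}$ (resp.\ $\{a_{k-2},b_2\}$) leaves a $2$-connected graph with the remaining interior vertices of the two fans in different components — giving the two non-edges. The only configurations that escape this are the ones where each fan is so short that these ``distinct interior vertices'' do not exist: each $F_i'$ is a semi-wheel of order $4$ or $5$, or a semi-prism on $6$ vertices, and $G[S]$ has the prescribed number of edges. Matching these residual configurations against the pictures should yield exactly $K_5^-$, $H_1$, $H_2$, $H_{10}'$, and the wheels $W_n$ ($n\ge 5$) — the latter arising when one side is a long semi-wheel through a fixed centre and $G[S]$ is edgeless, so that the hub of the wheel is the common centre and $S$ together with that centre forms the rim structure.

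**Where the real work is.**

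The bookkeeping obstacle is sub-case (iii) with both semi-wheels, because the center of $F_1'$ and the center of $F_2'$ may or may not coincide, may or may not lie in $S$, and $G[S]$'s single possible edge may or may not join the two ``endpoints'' $\{a_1,a_{k-1}\}$, $\{b_1,b_{l-1}\}$ in compatible ways; I expect this is where the wheel family $W_n$ is produced and also where one must be most careful not to miss a configuration that genuinely has only one contractible non-edge. The cleanest way to handle it is to first argue that if either semi-wheel has order $\ge 6$ we always win (two non-edges), reducing to orders in $\{4,5\}$ on each side, which is a finite check; then separately handle the ``$W_n$'' degenerate reading where $F_1'$ is a long semi-wheel but $F_2'$ collapses because $C_2$ is a single vertex adjacent to all of $S$. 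For the semi-prism sub-cases (i) and (ii) the argument is shorter: a semi-prism already gives two ``free'' swaps $\{x_1,z_2\}$, $\{z_1,x_2\}$ (or the mixed analogues against a semi-wheel), so the only survivor is the single edgeless-$S$ configuration producing $H_{10}'$ (and, when the other side also degenerates, $K_5^-$). Finally I would double-check each claimed extremal graph actually has its non-edge $\{u,v\}$ contractible and all others non-contractible, closing the ``if'' direction; since the lemma only asserts the ``at least two, or isomorphic to'' dichotomy, this last verification may be deferred to the proof of Theorem~\ref{thm1}.
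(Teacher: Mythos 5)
Your opening reduction is where the argument breaks. You claim that if $G[S]$ has at least two edges then each $G[V(F_i)]$ is $2$-connected and hence, by Lemma~\ref{lem1}, every pair $\{u,v\}$ with $u\in C_1$, $v\in C_2$ is a contractible non-edge. But Lemma~\ref{lem1} requires a $2$-connected subgraph $H$ with \emph{both} $u$ and $v$ outside $V(H)$ and lying in different components of $G-V(H)$. Taking $H=G[V(F_1)]$ fails because $u\in V(H)$; taking $H=G[V(F_2)]$ fails because $v\in V(H)$. This step is legitimate in the proof of Lemma~\ref{lem6} only because there a \emph{third} component supplies a $2$-connected subgraph $G[V(F_3)]$ containing $S$ but avoiding $C_1\cup C_2$; with two components no such separator exists unless $G[S]$ itself is $2$-connected, i.e.\ $G[S]\cong K_3$. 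That is exactly why the paper only disposes of $G[S]\cong K_3$ at the outset and carries the cases with one or two edges in $G[S]$ through the whole case analysis.

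The error is not cosmetic: it deletes genuine extremal graphs. The wheels $W_n$ with $n\ge 6$ arise precisely when $F_1'$ is a semi-wheel of order four, $F_2'$ is a longer semi-wheel with center $x$, and $G[S]$ has the two edges $xy,xz$; and $H_1$ arises with $G[S]\cong P_3$ (two edges) and $|C_2|=2$. Both violate your claim that the only survivors of the $|E(G[S])|\ge 2$ case have $|C_1|=|C_2|=1$, so your reduction would wrongly certify that these graphs have at least two contractible non-edges and your subsequent analysis (restricted to $|E(G[S])|\le 1$) could never recover them. The remainder of your sketch follows the paper's case split on the types of $F_1'$ and $F_2'$ and the key ideas (long semi-wheels force two contractible non-edges via explicit $2$-connected cycles; semi-prisms give the swaps $\{x_1,z_2\},\{z_1,x_2\}$) are in the right spirit, but the case analysis must be redone with $G[S]$ allowed to have up to two edges, tracking in each subcase which edges of $G[S]$ are forced by the degree conditions on $x,y,z$.
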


\begin{proof}
By Lemma~\ref{lem2}, we have that for $i=1, 2$, $F_{i}^{\prime}$ is either a semi-prime or a semi-wheel, and $S(F_{i}^{\prime})=S$. Suppose that $G$ has at most one contractible non-edge. If $|C_{1}|=|C_{2}|=1$, then $G$ is isomorphic to $W_{5}$ or $K_{5}^{-}$. We assume that $|C_{1}|\geq2$ or $|C_{2}|\geq2$.
If $G[S]\cong K_{3}$, then for each $u\in V(C_{1})$ and $v\in V(C_{2})$, the non-edge $\{u, v\}$ is contractible, and thus, $G$ has at least two contractible non-edges, a contradiction. So we assume that $G[S]$ has at most two edges.

{\bf Case 1.} $F_{1}^{\prime}$ or $F_{2}^{\prime}$ is a semi-prime.

Without loss of generality, assume that $F_{1}^{\prime}$ is a semi-prime.
If $F_{2}^{\prime}$ is also a semi-prime, then we see that $G[S]$ has two edges. If follows that $G$ has at least two contractible non-edges, a contradiction.
If $F_{2}^{\prime}$ is a semi-wheel of order $l\geq5$, then we may assume that $b_{0}=x$ and $b_{1}=y$ without loss of generality. If $yz\notin E(G)$, then $\{xy, xz\}\subset E(G)$, implying that $G-x_{1}-b_{2}$ and $G-x_{1}-b_{l-2}$ are both 2-connected. If $yz\in E(G)$, then $G-y_{1}-b_{l-2}$ and $G-z_{1}-b_{2}$ are both 2-connected. By Lemma \ref{lem1}, $G$ has at least two contractible non-edges, a contradiction.
Therefore, $F_{2}^{\prime}$ is a semi-wheel of order four, and then, $G\cong H_{2}$.

{\bf Case 2.} One of $F_{1}^{\prime}$ and $F_{2}^{\prime}$ is a semi-wheel of order four, and the other is a semi-wheel of order $l\geq5$.

Without loss of generality, we assume that $F_{1}^{\prime}$ is a semi-wheel of order four, and $b_{0}=x$, $b_{1}=y$.
If $yz\notin E(G)$, then $\{xy, xz\}\subset E(G)$, and then $G\cong W_{n}$ where $n\geq6$. Consider the case when $yz\in E(G)$. If $l=5$, then $G\cong H_{1}$ when $G[S]\cong P_{3}$, and $G\cong H_{10}^{\prime}$ when $G[S]$ contains one edge.
If $l\geq6$, then we see that $xb_{2}yzb_{l-2}x$ is 2-connected, implying that for $3\leq j\leq l-3$, $\{a_{2}, b_{j}\}$ is contractible by Lemma \ref{lem1}. Therefore, $l=6$. If $xy\in E(G)$ or $xz\in E(G)$, then we see that $\{a_{2}, b_{2}\}$ or $\{a_{2}, b_{4}\}$ is contractible, a contradiction. So $xy\notin E(G)$ and $xz\notin E(G)$, implying that $G\cong H_{2}$.

{\bf Case 3.} Both $F_{1}^{\prime}$ and $F_{2}^{\prime}$ are semi-wheels of order at least five.

Let the order of $F_{1}^{\prime}$ and $F_{2}^{\prime}$ be $k$ and $l$, respectively. Suppose that they have a common center $x$, and $a_{1}=b_{1}=y$ without loss of generality. If $yz\notin E(G)$, then $G\cong W_{n}$ where $n\geq7$. If $yz\in E(G)$, then we see that $xa_{k-2}zyb_{2}x$ and $xa_{2}yzb_{l-2}x$ is 2-connected. It follows that both $\{a_{2}, b_{l-2}\}$ and $\{a_{k-2}, b_{2}\}$ are contractible non-edges by Lemma \ref{lem1}.
Hence, they have different centers. Without loss of generality, we assume that $a_{0}=b_{1}=x$ and $a_{1}=b_{0}=y$. Since $d_{G}(z)\geq3$, $xz\in E(G)$ or $yz\in E(G)$. If $xz\in E(G)$, then $xa_{2}yb_{l-2}zx$ is 2-connected, implying that for any $3\leq i\leq k-2$ and $2\leq j\leq l-3$, $\{a_{i}, b_{j}\}$ are contractible by Lemma \ref{lem1}. It follows $k=l=5$. Furthermore, $xy\notin E(G)$ and $yz\notin E(G)$. Otherwise, either $\{a_{2}, b_{2}\}$ or $\{a_{2}, b_{3}\}$ is contractible, and thus, $G$ has at least two contractible non-edges, a contradiction. Consequently, we find $G\cong H_{2}$. If $yz\in E(G)$, we can similarly conclude that $G\cong H_{2}$.
\end{proof}

\begin{lem}\label{lem9}
If $V(C_{1})\cup S=V(F_{1})$, $V(C_{2})\cup S\neq V(F_{2})$ and $|V(C_{1})|=2$, then $G$ either has at least two contractible non-edges, or is isomorphic to $H_{5}^{\prime}$, $H_{8}^{\prime}$, $H_{9}^{\prime}$, $H_{2}$ or $H_{4}$.
\end{lem}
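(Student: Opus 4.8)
The plan is to assume for contradiction that $G$ has at most one contractible non-edge and conclude that $G$ is one of the five named graphs. First I would pin down $G[V(C_1)\cup S]$. Since $|V(C_1)|=2$ and $V(C_1)\cup S=V(F_1)$, the fan $F_1$ has order $5$; as a semi-prism has order $6$, Lemma~\ref{lem2} forces $F_1'$ to be a semi-wheel of order $5$ with $S(F_1')=S$. Hence we may write $S=\{a_0,a_1,a_4\}$ with centre $a_0$ and $V(C_1)=\{a_2,a_3\}$, so that the edges meeting $\{a_2,a_3\}$ are precisely $a_1a_2,\ a_2a_3,\ a_3a_4,\ a_0a_2,\ a_0a_3$ and $d_G(a_2)=d_G(a_3)=3$. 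Consequently $S$, $\{a_0,a_1,a_3\}$ and $\{a_0,a_2,a_4\}$ are all $3$-cuts, so no non-edge with both ends in $\{a_0,a_1,a_2,a_3,a_4\}$ is contractible; every contractible non-edge of $G$ is therefore $\{a_2,v\}$ or $\{a_3,v\}$ with $v\in V(C_2)$, or has both ends in $V(C_2)\cup S$.

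Next I would carry out the main reduction using Lemma~\ref{lem1}. Fix $w\in V(C_2)\setminus V(F_2)$, which exists by hypothesis. If $G[V(F_2)]$ is $2$-connected then, since $S\subseteq V(F_2)$, the set $\{a_2,a_3\}$ is a full component of $G-V(F_2)$ while $w$ lies in another, so both $\{a_2,w\}$ and $\{a_3,w\}$ are contractible non-edges, a contradiction. Thus $G[V(F_2)]$ is not $2$-connected; in particular $G[S]\not\cong K_3$ (otherwise $F_2$ together with the triangle on $S$ would be $2$-connected), so $|E(G[S])|\le 2$. Moreover the ``double fan'' $G[V(F_1)\cup V(F_2)]$ — the union of $F_1$ and $F_2$ glued along $S$ — is $2$-connected, and from it, its one-vertex deletions, and supergraphs such as $G[V(F_2)\cup\{a_2,a_3\}]$, I would extract contractible non-edges inside $V(C_2)\cup S$ whenever $V(C_2)\setminus V(F_2)$ is large or $C_2$ is large; this forces $|V(C_2)|$ to be bounded.

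Finally, with $|E(G[S])|\le 2$ and $|V(C_2)|$ bounded, I would enumerate the possible shapes of $G[V(C_2)\cup S]$ — equivalently, of the fan $F_2$, of the vertices in $V(C_2)\setminus V(F_2)$ and their neighbourhoods, and of the edge set of $G[S]$ — using $3$-connectivity ($d_G(a_1),d_G(a_4)\ge 3$, $d_G(w)\ge 3$, absence of a $2$-cut) to cut the list down to finitely many configurations. For each surviving configuration I would either point to two contractible non-edges via Lemma~\ref{lem1} or recognize $G$ as $H_2$, $H_4$, $H_5'$, $H_8'$, or $H_9'$.

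The main obstacle is this last step. Unlike in the proofs of Lemmas~\ref{lem7} and~\ref{lem8}, Lemma~\ref{lem2} gives no control over $C_2$ here, since its own minimum fan $F_2$ does not span $V(C_2)\cup S$; the graph $G[V(C_2)\cup S]$ is therefore far less rigid, and the delicate part is organizing the sub-cases so that every graph apart from the five exceptions is shown to contain two genuinely distinct contractible non-edges — being careful not to count one non-edge twice and not to mistake a non-contractible pair for a contractible one.
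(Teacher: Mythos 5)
Your setup is correct: since $|V(F_1)|=5$, Lemma~\ref{lem2} does force $F_1'$ to be a semi-wheel of order five with $S(F_1')=S$, $d_G(a_2)=d_G(a_3)=3$, and this matches the paper's starting point. But from there the proposal is a plan rather than a proof, and the plan is missing the ideas that make the argument go through. The pivotal sentence ``I would extract contractible non-edges \ldots whenever $V(C_2)\setminus V(F_2)$ is large or $C_2$ is large; this forces $|V(C_2)|$ to be bounded'' is an assertion of the conclusion of the hard part, not an argument for it. Note in particular that your main tool, the $2$-connected double fan $G[V(F_1)\cup V(F_2)]$, only produces contractible non-edges between \emph{different components} of $G-(V(F_1)\cup V(F_2))=G[W]$; if $G[W]$ is connected it yields nothing, and it never bounds the lengths of the three fan paths $P^2_x,P^2_y,P^2_z$, so $|V(C_2)|$ is not bounded by anything you have written down. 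You acknowledge this yourself in the final paragraph, which concedes that the case organization is ``the main obstacle.''

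What the paper actually does to get past this obstacle is structural, not a brute enumeration. First it exploits that $y=a_1$ and $z=a_4$ each have only two neighbours inside $F_1\cup F_2$, hence a third neighbour elsewhere, and proves (Claim~1, via a fragment argument on a $3$-cut $T\supseteq\{a_3,c_2\}$ together with Lemma~\ref{lem5}) that these third neighbours must lie in $W$; this already gives $G[S]$ edgeless. Second (Claim~2) it shows there is a $y$--$z$ path $P$ internally disjoint from $F_1\cup F_2$; taking $P$ shortest, the $2$-connected subgraphs $xa_3zPyP_y^2c_2P_x^2x$ and $xa_2yPzP_z^2c_2P_x^2x$ force $|V(P^2_y)|,|V(P^2_z)|\le 3$, $W=V(P)\setminus\{y,z\}$, $d_G(y)=d_G(z)=3$ and $m\le 2$ --- and only \emph{then} is the configuration finite enough to enumerate into $H_5'$, $H_8'$, $H_9'$, $H_2$, $H_4$. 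None of these steps (the localization of the extra neighbours of $y,z$ into $W$, the existence and shortness of the path $P$, the resulting bounds) appears in your proposal, so as written it does not establish the lemma.
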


\begin{proof}
Suppose that $G$ has at most one contractible non-edge. By Lemma~\ref{lem2}, $F_{1}^{\prime}$ is a semi-wheel of order five. Denote $a_{0}=x$, $a_{1}=y$, and $a_{4}=z$ without loss of generality. Since $G$ is 3-connected, $d_{G}(y)\geq3$ and $d_{G}(z)\geq3$, and then, $N_{G}(y)-N_{F_{1}\cup F_{2}}(y)\neq\emptyset$ and $N_{G}(z)-N_{F_{1}\cup F_{2}}(z)\neq\emptyset$. Let $W=(V(C_{2})\cup S)-V(F_{2})$.

\noindent{\bf Claim 1.} $N_{G}(y)-N_{F_{1}\cup F_{2}}(y)\subseteq W$ and $N_{G}(z)-N_{F_{1}\cup F_{2}}(z)\subseteq W$.

\begin{proof}
We only show that $N_{G}(y)-N_{F_{1}\cup F_{2}}(y)\subseteq W$. Clearly, $y$ has only one neighbour in $V(P_{y}^{2})$, otherwise, it would contradict the minimality of $F_{2}$.
Next, we show that there does not exist a $y^{\prime}$ such that $y^{\prime}\in N_{G}(y)\cap ((V(P_{x}^{2})\cup V(P_{z}^{2}))-\{c_{2}\})$. By contradiction, without loss of generality, assume that there exists $y^{\prime}\in V(P_{z}^{2})-\{c_{2}\}$ such that $yy^{\prime}\in E(G)$. Since $xa_{3}zy^{\prime}yP_{y}^{2}c_{2}P_{x}^{2}x$ is 2-connected, we see that $|W|=1$ and $c_{2}y^{\prime}\in E(G)$, otherwise, $G$ has at least two contractible non-edges by Lemma \ref{lem1}. Let $W=\{w\}$. Then $\{a_{2}, w\}$ is a contractible non-edge of $G$. We observe that for each $s\in\{x, y, z\}$, there exists $u\in V(P_{s}^{2})-\{c_{2}\}$ such that $uw\in E(G)$. Otherwise, $F_{1}\cup F_{2}-(V(P_{s}^{2})-\{c_{2}\})$ is 2-connected, and $s$ and $w$ lie in different components, implying that $\{s, w\}$ is a contractible non-edge by Lemma \ref{lem1}, a contradiction.

Since $\{a_{2}, c_{2}\}$ is not contractible, there is a 3-cut $T$ such that $T\supseteq\{a_{3}, c_{2}\}$. Let $B_{1}$ be a $T$-fragment and let $B_{2}=G-T-B_{1}$. Then $S\cap B_{1}\neq\emptyset$ and $S\cap B_{2}\neq\emptyset$. Without loss of generality, we assume $|S\cap B_{1}|=1$. If $C_{2}\cap T=\{c_{2}\}$, then for some $s\in\{x, y, z\}$, there does not exist $u\in V(P_{s}^{2})-\{c_{2}\}$ such that $uw\in E(G)$, a contradiction. Hence, $|C_{2}\cap T|=2$, and then, $S\cap T=\emptyset$ and $C_{1}\cap T=\{a_{2}\}$. By Lemma \ref{lem5}, $C_{1}\cap B_{1}=\emptyset$. It follows $C_{1}\cap B_{2}=\{a_{3}\}$, and thus, $S\cap B_{1}=\{y\}$. Since $y^{\prime}\in N_{G}(y)$, $y^{\prime}\in C_{2}\cap(B_{1}\cup T)$. If $y^{\prime}\in C_{2}\cap B_{1}$, then $(C_{2}\cap T)-\{c_{2}\}\in V(P_{z}^{2})$. It follows $w\in C_{2}\cap(B_{1}\cup B_{2})$, and thus, there does not exist $u\in V(P_{x}^{2})-\{c_{2}\}$ or $V(P_{y}^{2})-\{c_{2}\}$ such that $uw\in E(G)$, a contradiction. If $y^{\prime}\in C_{2}\cap T$, we can similarly obtain a contradiction.
\end{proof}

By Claim 1, we have that $G[S]$ has no edges.

\noindent{\bf Claim 2.} There exists a $y-z$ path that is internally disjoint from $F_{1}\cup F_{2}$.

\begin{proof}
By contradiction, suppose that every $y-z$ path is not internally disjoint from $F_{1}\cup F_{2}$. Let $y^{\prime}\in N_{G}(y)-N_{F_{1}\cup F_{2}}(y)$ and $z^{\prime}\in N_{G}(z)-N_{F_{1}\cup F_{2}}(z)$. Then $y^{\prime}\neq z^{\prime}$ and $y^{\prime}z^{\prime}\notin E(G)$.
Note that $G[V(F_{1})\cup V(F_{2})]$ is 2-connected and $G[W]$ is disconnected. If $W$ has at least three parts or some part contains at least two vertices, then $G$ has at least two contractible non-edges by Lemma~\ref{lem1}. So $W=\{y^{\prime}, z^{\prime}\}$ and $\{y^{\prime}, z^{\prime}\}$ is a contractible non-edge.
If there exists $x^{\prime}\in V(P_{x}^{2})-\{c_{2}\}$ such that $y^{\prime}x^{\prime}\in E(G)$, then $xa_{3}zP_{z}^{2}c_{2}P_{y}^{2}yy^{\prime}x^{\prime}P_{x}^{2}x$ is 2-connected, implying that $\{a_{2}, z^{\prime}\}$ is contractible by Lemma \ref{lem1}.
If there is no $x^{\prime}\in V(P_{x}^{2})-\{c_{2}\}$ such that $y^{\prime}x^{\prime}\in E(G)$, then $a_{2}a_{3}zP_{z}^{2}c_{2}P_{y}^{2}ya_{2}$ is 2-connected, and thus, $\{x, y^{\prime}\}$ is contractible. In either case, $G$ has at least two contractile non-edges, a contradiction.
\end{proof}

Let $P=u_{0}u_{1} \dots u_{m}u_{m+1}$ be a shortest such path, where $u_{0}=y$ and $u_{m+1}=z$. Observe that $m\geq1$.
Since $xa_{3}zPyP_{y}^{2}c_{2}P_{x}^{2}x$ is a 2-connected subgraph of $G$, for each $u\in(V(P_{z}^{2})-\{z, c_{2}\})\cup(W-V(P))$, non-edge $\{a_{2}, u\}$ is contractible.
Furthermore, non-edge $\{a_{3}, u\}$ is contractible for each $u\in(V(P_{y}^{2})-\{y, c_{2}\})\cup(W-V(P))$ because $xa_{2}yPzP_{z}^{2}c_{2}P_{x}^{2}x$ is 2-connected.
Consequently, $|V(P_{y}^{2})|\leq3$, $|V(P_{z}^{2})|\leq3$, and $W=V(P)-\{y, z\}$. It follows $d_{G}(y)=d_{G}(z)=3$.

If there does not exist $u_{i}$ such that $x^{\prime}u_{i}\in E(G)$, where $1\leq i\leq m$ and $x^{\prime}\in V(P_{x}^{2})-\{c_{2}\}$, then $\{x, u_{i}\}$, where $1\leq i\leq m$, are contractible non-edges since $ya_{2}a_{3}zP_{z}^{2}c_{2}P_{y}^{2}y$ is 2-connected.
If there exists $x^{\prime}\in V(P_{x}^{2})-\{c_{2}\}$ such that $x^{\prime}u_{i}\in E(G)$ for $1<i\leq m$, then $xa_{2}yP_{y}^{2}c_{2}P_{z}^{2}zu_{m}Pu_{i}x^{\prime}P_{x}^{2}x$ is 2-connected, implying that $\{a_{3}, u_{i}\}$ where $1\leq i\leq m-1$ is a contractible non-edge. Similarly, we have that $\{a_{2}, u_{j}\}$ where $2\leq j\leq m$ is contractible if $x^{\prime}u_{i}\in E(G)$ for $1\leq i<m$, because $xa_{3}zP_{z}^{2}c_{2}P_{y}^{2}yu_{1}Pu_{i}x^{\prime}P_{x}^{2}x$ is 2-connected.
Therefore, $m\leq2$. Furthermore, if $m=2$, we have that $\{a_{3}, u_{1}\}$ or $\{a_{2}, u_{2}\}$ is the unique contractible non-edge of $G$.

\noindent{\bf Claim 3.} If $zc_{2}\notin E(G)$, then $G\cong H_{4}$.

\begin{proof}
Let $V(P_{z}^{2})=\{z, t, c_{2}\}$. Then $\{a_{2}, t\}$ is a contractible non-edge of $G$. From the above statement, it follows that $yc_{2}\in E(G)$ and $m=1$.
Moreover, we see that there does not exist $x^{\prime}\in V(P_{x}^{2})-\{c_{2}\}$ such that $x^{\prime}t\in E(G)$. Otherwise, $xa_{2}yu_{1}ztx^{\prime}P_{x}^{2}x$ is a 2-connected subgraph, and by Lemma \ref{lem1}, $\{a_{3}, c_{2}\}$ is contractible, a contradiction.
So $N_{G}(t)=\{z, c_{2}, u_{1}\}$.
If there exist $x^{\prime\prime}\in V(P_{x}^{2})-\{x\}$ such that $x^{\prime\prime}u_{1}\in E(G)$, then $ya_{2}a_{3}zu_{1}x^{\prime\prime}P_{x}^{2}c_{2}y$ is 2-connected, implying that $\{x, t\}$ is contractible by Lemma \ref{lem1}, a contradiction. So $N_{G}(u_{1})=\{y, z, t, x\}$. Hence, $xc_{2}\in E(G)$, and then, $G\cong H_{4}$.
\end{proof}

Similarly, we can deduce that $G\cong H_{4}$ if $yc_{2}\notin E(G)$. In the following, we assume that $yc_{2}\in E(G)$ and $zc_{2}\in E(G)$. Consider the case $m=2$. Without loss of generality, we assume that $x^{\prime}u_{1}\in E(G)$ for some $x^{\prime}\in V(P_{x}^{2})-\{c_{2}\}$. Thus, $\{a_{2}, u_{2}\}$ is the unique contractible non-edge of $G$, and $N_{G}(u_{2})=\{z, u_{1}, c_{2}\}$. If $x^{\prime}\neq x$, then $ya_{2}a_{3}zc_{2}P_{x}^{2}x^{\prime}u_{1}y$ is 2-connected, and thus, $\{x, u_{2}\}$ is contractible, a contradiction. So $xc_{2}\in E(G)$. Similarly, we have that $\{x, u_{2}\}$ is contractible if $c_{2}u_{1}\in E(G)$. So $c_{2}u_{1}\notin E(G)$, and then, $G\cong H_{4}$.

Let $m=1$. If there exists two vertices $\{x_{1}, x_{2}\}\subseteq V(P_{x}^{2})-\{c_{2}\}$ where $d_{G}(x, x_{1})<d_{G}(x, x_{2})$ such that $x_{1}u_{1}\in E(G)$ and $x_{2}u_{1}\in E(G)$, then we see that $\{a_{2}, x_{2}\}$ and $\{a_{3}, x_{2}\}$ are contractible, a contradiction. It follows $|V(P_{x}^{2})|\leq3$.
If $|V(P_{x}^{2})|=3$, then we let $V(P_{x}^{2})=\{x, x^{\prime}, c_{2}\}$. Then we have that $N_{G}(x^{\prime})=\{x, c_{2}, u_{1}\}$ and $N_{G}(x)=\{a_{2}, a_{3}, x^{\prime}\}$. It follows that $G$ is isomorphic to $H_{8}^{\prime}$ or $H_{9}^{\prime}$ depending on whether the edge $c_{2}u_{1}$ exists.
If $|V(P_{x}^{2})|=2$, then $c_{2}u_{1}\in E(G)$ since $C_{2}$ is connected. Hence, $G\cong H_{5}^{\prime}$ if $xu_{1}\in E(G)$, and $G\cong H_{2}$ if $xu_{1}\notin E(G)$.
\end{proof}

\begin{lem}\label{lem10}
If $V(C_{1})\cup S=V(F_{1})$, $V(C_{2})\cup S\neq V(F_{2})$ and $|V(C_{1})|\geq3$, then $G$ either has at least two contractible non-edges or is isomorphic to $H_{5}^{\prime}$, $H_{8}^{\prime}$, $H_{9}^{\prime}$, $H_{2}$ or $H_{4}$.
\end{lem}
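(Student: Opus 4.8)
The plan is to assume that $G$ has at most one contractible non-edge and to deduce that $G$ is isomorphic to $H_{5}^{\prime}$, $H_{8}^{\prime}$, $H_{9}^{\prime}$, $H_{2}$ or $H_{4}$. Put $W=(V(C_{2})\cup S)-V(F_{2})\neq\emptyset$. Since $V(C_{1})\cup S=V(F_{1})$, Lemma~\ref{lem2} tells us $F_{1}^{\prime}$ is a semi-prism or a semi-wheel with $S(F_{1}^{\prime})=S$, and because $|V(C_{1})|\geq3$ only two situations remain: $F_{1}^{\prime}$ is a semi-prism with $V(C_{1})=\{x_{1},y_{1},z_{1}\}$, or $F_{1}^{\prime}$ is a semi-wheel of order $k\geq6$. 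The recurring device is the following special case of Lemma~\ref{lem1}: whenever $H$ is a $2$-connected subgraph of $G$ with $S\subseteq V(H)$, every $u\in V(C_{1})-V(H)$ together with every $v\in V(C_{2})-V(H)$ is a contractible non-edge, because $C_{1}$ and $C_{2}$ lie in different components of $G-V(H)$. I will also use that a non-edge $\{u,v\}$ fails to be contractible only if some $3$-cut of $G$ contains $\{u,v\}$. So in each case it suffices to produce two such pairs giving distinct non-edges, or to pin $G$ down outright.

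\emph{Semi-wheel case.} Write $x=f(1)_{0}$ for the centre, $y=f(1)_{1}$, $z=f(1)_{k-1}$; then $C_{1}$ is the path $f(1)_{2}f(1)_{3}\cdots f(1)_{k-2}$ and $x$ is adjacent to each $f(1)_{j}$ with $2\le j\le k-2$. First I would dispose of every edge of $G[S]$: if $yz\in E(G)$, then $yzP_{z}^{2}c_{2}P_{x}^{2}xf(1)_{2}y$ is a $2$-connected subgraph containing $S$ that meets $C_{1}$ only in $f(1)_{2}$, so with any $w\in W$ both $\{f(1)_{3},w\}$ and $\{f(1)_{4},w\}$ are contractible (distinct since $k\ge6$), a contradiction; the edges $xy$ and $xz$ are excluded the same way. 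So $G[S]$ is edgeless. Using $xf(1)_{2},xf(1)_{k-2}\in E(G)$, the cycle $yf(1)_{2}xf(1)_{k-2}zP_{z}^{2}c_{2}P_{y}^{2}y$ is $2$-connected, contains $S$, and misses $f(1)_{3},\dots,f(1)_{k-3}$ of $C_{1}$ and all of $W$; for $k\geq7$ this yields two contractible non-edges, a contradiction, so $k=6$. For $k=6$ this cycle misses only $f(1)_{3}$ from $C_{1}$ and $(V(P_{x}^{2})-\{x,c_{2}\})\cup W$ from $C_{2}$, so unless $|V(P_{x}^{2})|=2$ and $|W|=1$ we again get two contractible non-edges; in the single remaining configuration I would run the local analysis of Lemma~\ref{lem9} (its Claims~1--3, using the degree bounds on $y,z$ and the adjacencies of the fan paths $P_{x}^{2},P_{y}^{2},P_{z}^{2}$) to obtain either a second contractible non-edge or one of the listed graphs.

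\emph{Semi-prism case.} Here $x_{1}y_{1}z_{1}$ is a triangle with pendant edges $x_{1}x,y_{1}y,z_{1}z$, so each of $x_{1},y_{1},z_{1}$ has degree $3$ in $G$. If $G[S]$ had two edges they would form a path on $S$, say $xy,yz\in E(G)$, and then $xyzP_{z}^{2}c_{2}P_{x}^{2}x$ would be a $2$-connected subgraph containing $S$ and disjoint from $C_{1}$, so $\{x_{1},w\},\{y_{1},w\},\{z_{1},w\}$ would all be contractible for $w\in W$, a contradiction. Hence $G[S]$ has at most one edge. From here the argument mirrors the proof of Lemma~\ref{lem9}: the rigidity of the semi-prism --- every vertex of $C_{1}$ keeps all its neighbours inside $C_{1}\cup S$ --- forces any cycle through $S$ that omits a vertex of $C_{1}$ to contain the other two vertices of $C_{1}$ in series and to close its remainder through $C_{2}$ and $W$. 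I would therefore (i) show the neighbours of $x,y,z$ outside $F_{1}\cup F_{2}$ all lie in $W$, (ii) extract a clean $S$-to-$S$ path through $W\cup C_{2}$, and (iii) invoke $d_{G}(x),d_{G}(y),d_{G}(z)\geq3$ together with $W\neq\emptyset$ to bound $|V(C_{2})|$ and $|W|$. A finite check over the surviving configurations of $G[S]$, $|W|$, $|V(C_{2})|$ and the adjacencies among $W$, $c_{2}$, $P_{x}^{2},P_{y}^{2},P_{z}^{2}$ and $S$ then yields in each case either two contractible non-edges or an isomorphism of $G$ with one of $H_{5}^{\prime},H_{8}^{\prime},H_{9}^{\prime},H_{2},H_{4}$.

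I expect the main obstacle to be exactly these small, rigid endgames --- the residual $k=6$ semi-wheel with $G[S]$ edgeless, $|V(P_{x}^{2})|=2$, $|W|=1$, and the semi-prism with $G[S]$ carrying at most one edge --- where the cycle constructions above stop producing a second contractible non-edge. There one is forced into the delicate bookkeeping (as in Lemma~\ref{lem9}) of precisely which vertices of $W$ and of the fan paths are adjacent to which vertices of $S$; but once those adjacencies are determined $G$ has very few vertices and its identification with one of the five graphs is routine.
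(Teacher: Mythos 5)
Your semi-wheel case is essentially the paper's argument: you rule out the edges of $G[S]$, then use the $2$-connected cycle $ya_{2}xa_{k-2}zP_{z}^{2}c_{2}P_{y}^{2}y$ together with Lemma~\ref{lem1} to force $k=6$, $|W|=1$ and $xc_{2}\in E(G)$. But you stop there and defer the surviving configuration to ``the local analysis of Lemma~\ref{lem9}'', which is a genuine gap: that configuration does not satisfy the hypotheses of Lemma~\ref{lem9} (there $F_{1}^{\prime}$ is a semi-wheel of order five and $|V(C_{1})|=2$), so its claims cannot simply be invoked. The paper finishes this case concretely: writing $W=\{w\}$, one shows $xw\in E(G)$ (else $a_{2}a_{3}a_{4}zP_{z}^{2}c_{2}P_{y}^{2}ya_{2}$ is $2$-connected and $\{x,w\}$ is a second contractible non-edge), then that $y$ and $z$ have no neighbours on the opposite fan paths and none in $S$, hence $d_{G}(y)=d_{G}(z)=3$ and $w\in N_{G}(y)\cap N_{G}(z)$; then $xa_{2}ywza_{4}x$ is $2$-connected and separates $a_{3}$ from $c_{2}$, so $\{a_{3},c_{2}\}$ is a second contractible non-edge. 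In particular the semi-wheel case contributes no exceptional graphs at all, which your ``either a second contractible non-edge or one of the listed graphs'' leaves undecided.

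The larger gap is the semi-prism case. Your plan is to re-run the whole of Lemma~\ref{lem9}'s analysis from scratch (neighbourhood claims, an $S$--$S$ path through $W$, a finite check over configurations); none of this is carried out, and it is precisely where all the work would lie. The paper avoids it with one observation you are missing: replace the cut $S$ by the $3$-cut $T=\{x_{1},y,z\}$. Since $x_{1},y_{1},z_{1}$ have all their neighbours inside $V(C_{1})\cup S$, the set $\{y_{1},z_{1}\}$ is a component of $G-T$ of size exactly two whose minimum fan spans it, while the other component of $G-T$ (containing $x$, $C_{2}$ and $W$) still has a vertex outside its minimum fan; this is exactly the hypothesis of Lemma~\ref{lem9}, which then delivers the list $H_{5}^{\prime}$, $H_{8}^{\prime}$, $H_{9}^{\prime}$, $H_{2}$, $H_{4}$ with no further casework. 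Without either this reduction or a fully written-out analogue of Lemma~\ref{lem9}'s proof, your treatment of the semi-prism case is only an outline.
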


\begin{proof}
Suppose that $G$ has at most one contractible non-edge. By Lemma~\ref{lem2}, $F_{1}^{\prime}$ is a semi-prime or a semi-wheel of order at least six.
If $F_{1}^{\prime}$ is a semi-wheel of order $k$, we assume that $a_{0}=x$ and $a_{1}=y$ without loss of generality. Then we observe that $xa_{2}yP_{y}^{2}c_{2}P_{z}^{2}za_{k-2}x$ is 2-connected, implying that $\{a_{i}, v\}$ is contractible by Lemma~\ref{lem1}, where $3\leq i\leq k-3$ and $v\in W\cup V(P_{x}^{2})-\{x, c_{2}\}$. It follows that $k=6$, $|W|=1$ and $xc_{2}\in E(G)$. Let $W=\{w\}$. Note that $\{a_{3}, w\}$ is a contractible non-edge of $G$. If $xw\notin E(G)$, then $a_{2}a_{3}a_{4}zP_{z}^{2}c_{2}P_{y}^{2}ya_{2}$ is 2-connected, implying that $\{x, w\}$ is contractible by Lemma \ref{lem1}, a contradiction. So $xw\in E(G)$. If there exists $y^{\prime}\in N_{G}(y)\cap(V(P_{z}^{2})-\{c_{2}\})$, then $xa_{4}zP_{z}^{2}y^{\prime}yP_{y}^{2}c_{2}x$ is 2-connected, and thus, $\{a_{2}, w\}$ is contractible by Lemma \ref{lem1}, a contradiction. So $N_{G}(y)\cap (V(P_{z}^{2})-\{c_{2}\})=\emptyset$. Similarly, we have $yx\notin E(G)$. Therefore, $d_{G}(y)=3$ and $w\in N_{G}(y)$. Similarly, $d_{G}(z)=3$ and $w\in N_{G}(z)$. Then we see that $xa_{2}ywza_{4}x$ is 2-connected, implying that $\{a_{3}, c_{2}\}$ is contractible, a contradiction.
So $F_{1}^{\prime}$ is a semi-prime. Then we can transform the problem to Lemma \ref{lem9} by considering the 3-cut $\{x_{1}, y, z\}$, and thus, $G$ is isomorphic to $H_{5}^{\prime}$, $H_{8}^{\prime}$, $H_{9}^{\prime}$, $H_{2}$ or $H_{4}$.
\end{proof}

\section{Proof of Theorem~\ref{thm1}}
In this section, we give a proof of Theorem~\ref{thm1}.
Let $G$ be a 3-connected graph that contains exactly one contractible non-edge. If $G$ is 4-connected, then all non-edges are contractible, which implies that $G$ must contain exactly one non-edge. Therefore, $G\cong K_{n}^{-}$ where $n\geq 6$. Consequently, we may assume that $S=\{x, y, z\}$ is a 3-cut of $G$. By Lemmas \ref{lem3} and \ref{lem6}, $G-S$ has two components, denoted by $C_{1}$ and $C_{2}$. By Lemma \ref{lem7}, either $V(C_{1})\cup S=V(F_{1})$ or $V(C_{2})\cup S=V(F_{2})$. Without loss of generality, we assume $V(C_{1})\cup S=V(F_{1})$. If $V(C_{2})\cup S=V(F_{2})$, then by Lemma \ref{lem8}, $G$ is isomorphic to $K_{5}^{-}$, $H_{1}$ or $H_{2}$. Hence, we may assume $V(C_{2})\cup S\neq V(F_{2})$. Let $W=(V(C_{2})\cup S)-V(F_{2})$. If $|V(C_{1})|\geq2$, then by Lemmas \ref{lem9} and \ref{lem10}, $G$ is isomorphic to $H_{2}$ or $H_{4}$. Consequently, we may further assume that:

(1) For every 3-cut $T$ of $G$, $G-T$ has exactly two components, one of which consists of exactly one vertex, denoted by $x_{T}$, and the another component denoted by $C_{T}$.

(2) For any fan $F$ of minimum order among all $c-T$ fans in $G[V(C_{T})\cup T]$ and among all vertices $c$ in $C_{T}$, $(V(C_{T})\cup T)-V(F)\neq\emptyset$.

(3) If $\{u, v\}$ is a non-contractible non-edge, then there exists a vertex $t$ such that $t\in N_{G}(u)\cap N_{G}(v)$ and $d_{G}(t)=3$. We refer to this vertex $t$ as the \emph{``triad of u and v"}.

By (3), we have $2\leq|V(C_{S})|\leq7$.

\noindent{\bf Claim 1.} Every triangle of $G$ has at most one vertex of degree three.

\begin{proof}
Suppose that there exists a triangle $abc$ such that $d_{G}(a)=d_{G}(b)=3$. Let $N_{G}(a)=\{b, c, a_{1}\}$ and $N_{G}(b)=\{a, c, b_{1}\}$. Since $G$ is 3-connected, we have $a_{1}\neq b_{1}$, which implies that $R=\{a_{1}, b_{1}, c\}$ is a 3-cut of $G$. By (1), $G-R$ has a component consisting of a vertex $x_{R}$, and $G[\{a, b\}]$ forms another component. However, this contradicts (2).
\end{proof}

\noindent{\bf Claim 2.} $|V(C_{S})|\neq2$.

\begin{proof}
Suppose $|V(C_{S})|=2$. Let $V(C_{S})=\{a, b\}$. Since $C_{S}$ is connected, $ab\in E(G)$. By (3), we have that $S$ has one vertex of degree three. Without loss of generality, we assume $d_{G}(x)=3$. By Claim 1, we have $N_{G}(x)=\{x_{S}, a, b\}$, implying that $N_{G}(a)=\{x, y, z, b\}$ and $N_{G}(b)=\{x, y, z, a\}$. Then we see that $G\cong H_{3}^{\prime}$ when $yz\in E(G)$ and $G\cong H_{2}^{\prime}$ when $yz\notin E(G)$, a contradiction.
\end{proof}

\noindent{\bf Claim 3.} $|V(C_{S})|\neq4$.

\begin{proof}
Suppose $|V(C_{S})|=4$ and let $V(C_{S})=\{v_{1}, v_{2}, v_{3}, v_{4}\}$. According to (3), every 3-cut of $G$ contains at least two vertices of degree three. By Claim 1, every 3-cut is an independent set. Without loss of generality, assume $d_{G}(x)=d_{G}(y)=3$. If $N_{G}(x)=N_{G}(y)$, then $G-N_{G}(x)$ has at least three components, which contradicts (1). Therefore, $N_{G}(x)\neq N_{G}(y)$.

If $|N_{G}(x)\cap N_{G}(y)|=2$, we may assume that $N_{G}(x)=\{x_{S}, v_{1}, v_{2}\}$ and $N_{G}(y)=\{x_{S}, v_{2}, v_{3}\}$ without loss of generality. Then $\{x_{S}, v_{1}, v_{2}\}$ and $\{x_{S}, v_{2}, v_{3}\}$ are both 3-cuts of $G$, which implies that $v_{1}v_{2}\notin E(G)$ and $v_{2}v_{3}\notin E(G)$. Since $C_{S}$ is connected, $v_{2}v_{4}\in E(G)$ and we may assume $v_{3}v_{4}\in E(G)$ without loss of generality. Then we deduce that $v_{1}v_{3}\in E(G)$, and either $d_{G}(v_{1})=3$ or $d_{G}(v_{3})=3$. Otherwise, both $\{x, v_{3}\}$ and $\{y, v_{1}\}$ are contractible non-edges, a contradiction. If $d_{G}(v_{3})\neq3$, then $N_{G}(v_{3})=\{y, z, v_{1}, v_{4}\}$ and $d_{G}(v_{1})=3$. However, in this case, $N_{G}(v_{1})$ is not an independent set, a contradiction. Thus, $d_{G}(v_{3})=3$, which implies $v_{1}v_{4}\notin E(G)$. Then we have that $N_{G}(v_{1})=\{x, z, v_{3}\}$ and $N_{G}(v_{4})=\{z, v_{2}, v_{3}\}$. This implies $zv_{2}\notin E(G)$, and then $G\cong H_{6}^{\prime}$, a contradiction.

If $|N_{G}(x)\cap N_{G}(y)|=1$, we may assume that $N_{G}(x)=\{x_{S}, v_{1}, v_{2}\}$ and $N_{G}(y)=\{x_{S}, v_{3}, v_{4}\}$ without loss of generality. Then $v_{1}v_{2}\notin E(G)$ and $v_{3}v_{4}\notin E(G)$. Note that $d_{G}(z)\neq3$, otherwise, as in the above discussion, we can get a contradiction. Without loss of generality, we assume $\{zv_{2}, zv_{3}, zv_{4}\}\subseteq E(G)$. Since either $\{x, v_{3}\}$ or $\{x, v_{4}\}$ is non-contractible, there exists a triad of $x$ and $v_{3}$ or a triad of $x$ and $v_{4}$ by (3). This vertex must be $v_{1}$. Then $N_{G}(v_{1})$ is an independent set, and then $N_{G}(v_{1})=\{x, v_{3}, v_{4}\}$. Thus, we observe that there is no vertex that is a triad of $y$ and $v_{2}$, which implies that $\{y, v_{2}\}$ is a contractible non-edge. It follows that $\{z, v_{1}\}$ is non-contractible. Without loss of generality, we assume $v_{4}$ is a triad of $z$ and $v_{1}$. Since $d_{G}(v_{2})\geq3$, $v_{2}v_{3}\in E(G)$. However, $N_{G}(v_{2})=\{x, z, v_{3}\}$ is not an independent set, a contradiction.
\end{proof}

Similarly, we can deduce that  $|C_{S}|\neq 5, 6,7$. Consequently, $|C_{S}|=3$. Let $V(C_{S})=\{a, b, c\}$. By (3), we see that $S$ contains one vertex of degree three. Without loss of generality, we assume $d_{G}(x)=3$. By Claim 1, we may assume $N_{G}(x)=\{x_{S}, a, b\}$. If for any $u\in\{y, z, a, b\}$, $d(u)\geq4$, then $\{x_{S}, c\}$ and $\{x, c\}$ are contractible non-edges by (3), a contradiction.

So we may assume $d_{G}(y)=3$ without loss of generality. Then we see that $yz\notin E(G)$ by Claim 1 and $N_{G}(x)\neq N_{G}(y)$. Without loss of generality, we assume $N_{G}(y)=\{x_{S}, b, c\}$. Since $C_{S}$ is connected, either $ba\in E(G)$ or $bc\in E(G)$. We assume $ba\in E(G)$ without loss of generality. Then Claim 1 assures us that $d_{G}(a)\geq4$ and $d_{G}(b)\geq4$. This implies that $N_{G}(a)=\{x, z, b, c\}$ and there is no vertex that is a triad of $x$ and $c$. By (3), we have that $\{x, c\}$ is a contractible non-edge of $G$. Hence, $\{y, a\}$ is not contractible and $c$ must be a triad of $y$ and $a$. It follows $d_{G}(c)=3$. If $zb\notin E(G)$, then by (3), there exists a vertex that is a triad of $z$ and $b$. However, no such vertex exists. Therefore, $zb\in E(G)$. Similarly, $zc\in E(G)$. Then we have $G\cong H_{3}$.
\hfill\qedsymbol

\section*{Declaration of competing interest}
The authors declare that they have no conflicts of interest to this work.

\section*{Data availability}
No data was used for the research described in the article.

\end{document}